\newenvironment{enumerate*}%
  {\begin{enumerate}[(I)]%
    \setlength{\itemsep}{10pt}%
    \setlength{\parskip}{0pt}}%
  {\end{enumerate}}
\newtheorem{theorem}{Theorem}[section]
\newtheorem{proposition}[theorem]{Proposition}
\newtheorem{corollary}[theorem]{Corollary}
\newtheorem{conjecture}[theorem]{Conjecture}
\newtheorem{lemma}[theorem]{Lemma}
\theoremstyle{definition}
\DeclareMathOperator{\traj}{traj}
\DeclareMathOperator{\midp}{mid}
\DeclareMathOperator{\cyc}{cyc}
\DeclareMathOperator{\area}{area}
\DeclareMathOperator{\perim}{perim}
\DeclareMathOperator{\len}{length}
\newcommand{\dfn}[1]{\textcolor{blue}{\emph{#1}}}
\begin{document}

\title[]{Triangular-Grid Billiards and Plabic Graphs}

\author[Colin Defant and Pakawut Jiradilok]{Colin Defant}
\address[]{Department of Mathematics, Massachusetts Institute of Technology, Cambridge, MA 02139, USA}
\email{colindefant@gmail.com}
\author[]{Pakawut Jiradilok}
\address[]{Department of Mathematics, Massachusetts Institute of Technology, Cambridge, MA 02139, USA}
\email{pakawut@mit.edu}

\maketitle

\begin{abstract}
Given a polygon $P$ in the triangular grid, we obtain a permutation $\pi_P$ via a natural billiards system in which beams of light bounce around inside of $P$. The different cycles in $\pi_P$ correspond to the different trajectories of light beams. We prove that \[\area(P)\geq 6\cyc(P)-6\quad\text{and}\quad\perim(P)\geq\frac{7}{2}\cyc(P)-\frac{3}{2},\] where $\area(P)$ and $\perim(P)$ are the (appropriately normalized) area and perimeter of $P$, respectively, and $\cyc(P)$ is the number of cycles in $\pi_P$. The inequality concerning $\area(P)$ is tight, and we characterize the polygons $P$ satisfying $\area(P)=6\cyc(P)-6$. These results can be reformulated in the language of Postnikov's plabic graphs as follows. Let $G$ be a connected reduced plabic graph with essential dimension $2$. Suppose $G$ has $n$ marked boundary points and $v$ (internal) vertices, and let $c$ be the number of cycles in the trip permutation of $G$. Then we have \[v\geq 6c-6\quad\text{and}\quad n\geq\frac{7}{2}c-\frac{3}{2}.\] 
\end{abstract}

\section{Introduction}\label{SecIntro}

\subsection{Triangular-Grid Billiards}
Consider the infinite triangular grid in the plane, scaled so that each equilateral triangular grid cell has side length $1$ and oriented so that some of the grid lines are horizontal. We refer to the sides of these grid cells as \dfn{panes} because we will imagine that each pane either allows light to pass through it (like a window pane) or reflect off of it (like a mirror pane). Define a \dfn{grid polygon} to be a (not necessarily convex) polygon whose boundary is a union of panes. We assume that the boundary of a grid polygon (viewed as a closed curve) does not intersect itself. Suppose $P$ is a grid polygon whose boundary panes are $b_1,\ldots,b_n$, listed clockwise. Pick some boundary pane $b_{i}$, and emit a colored beam of light from the midpoint of $b_i$ into the interior of $P$ so that the light beam forms a $60^\circ$ angle with $b_i$ and travels either northeast, southeast, or west (depending on the orientation of $b_i$). The light beam will travel through the interior of $P$ until reaching the midpoint of a different boundary pane $b_{\pi(i)}$, which it will meet at a $60^\circ$ angle. This defines a permutation $\pi=\pi_P\colon[n]\to[n]$ (where $[n]:=\{1,\ldots,n\}$) called the \dfn{billiards permutation} of $P$. For example, if $P$ is the grid polygon in Figure~\ref{FigBill1}, then the cycle decomposition of $\pi_P$ is \[\pi_P=(1\,3\,32\,26\,6\,30\,2\,33\,25\,12\,14\,9\,21\,19\,29\,28\,4\,31)(5\,24\,13\,10\,20\,27)(7\,22\,23\,15\,17)(8\,11\,18\,16).\] 

\begin{figure}[ht]
  \begin{center}\includegraphics[height=6cm]{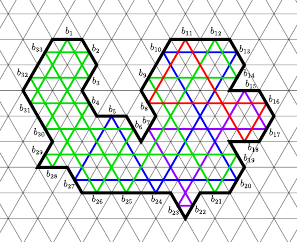}
  \end{center}
  \caption{A grid polygon $P$ with $33$ boundary panes. The billiards permutation $\pi_P$ has $4$ cycles. We have colored the $4$ different trajectories with different colors.}\label{FigBill1}
\end{figure}

One can interpret this definition of $\pi$ as a certain billiards process. Let us imagine that the boundary panes of $P$ are mirrors (and all other panes are transparent windows). When the light beam emitted from $b_i$ reaches $b_{\pi(i)}$, it will bounce off in such a way that the reflected beam forms a $60^\circ$ angle with $b_{\pi(i)}$. This reflected beam will then travel to $b_{\pi^2(i)}$, where it will bounce off at a $60^\circ$ angle and continue on to $b_{\pi^3(i)}$, and so on. We will be interested in the cycles of $\pi$. Given points $p$ and $p'$ in the plane, let us write $[p,p']$ for the line segment whose endpoints are $p$ and $p'$. Let $\midp(s)$ denote the midpoint of a line segment $s$. If $c=(i_1\,i_2\cdots i_r)$ is a cycle of $\pi$, then we define the \dfn{trajectory} of $c$ to be  \[\traj(c)=\bigcup_{j=1}^r[\midp(b_{i_j}),\midp(b_{\pi(i_j)})].\] The description of $\pi$ in terms of light beam billiards is convenient because we can imagine that the beams of light corresponding to different cycles have different colors; thus, we will use different colors to draw different trajectories (see Figure~\ref{FigBill1}). 

The investigation of billiards in planar regions is a classical and much-beloved topic in both dynamical systems and recreational mathematics \cite{Boldrighini, Croft, Croft2, DeTemple, DeTemple2, Gardner, Gutkin, Halpern, Sine}. However, the typical questions considered in previous works concern systems where the beams of light can have arbitrary initial positions and arbitrary initial directions. In contrast, our setup---which surprisingly appears to be new---imposes a great deal of rigidity by requiring each beam of light to start at the midpoint of a boundary pane and begin its journey in a direction that forms a $60^\circ$ angle with that boundary pane.  Although several traditional dynamically-flavored billiards problems (such as determining the existence of periodic trajectories) become trivial or meaningless under our rigid conditions, our setting affords some fascinating combinatorial/geometric questions. 

The major players in our story are the following quantities associated with a grid polygon $P$. The \dfn{perimeter} of $P$, denoted $\perim(P)$, is simply the number of boundary panes of $P$. We define the \dfn{area} of $P$, denoted $\area(P)$, to be the number of triangular grid cells in $P$.\footnote{Thus, our area measure is just the Euclidean area multiplied by the normalization factor $4/\sqrt 3$.} We write $\cyc(P)$ for the number of cycles in the associated permutation $\pi_P$, which is the same as the number of different light beam trajectories in the associated billiards system. Our main theorems address the following extremal question concerning the possible relationships between these quantities: How big must $\area(P)$ and $\perim(P)$ be in comparison with $\cyc(P)$?  

\begin{theorem}\label{thm:area}
If $P$ is a grid polygon, then \[\area(P)\geq 6\cyc(P)-6.\]
\end{theorem}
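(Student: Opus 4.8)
The plan is to treat the trajectories as a planar arrangement of closed curves and to extract the inequality from Euler's formula together with the rigidity of the $60^\circ$ angles, the sharp constant coming from a discharging argument on the triangular cells.

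\medskip

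\emph{Local structure.} First I would record how beams sit in the grid. Inside each triangular cell a beam travels along one of the three segments joining the midpoints of the cell's sides (an edge of the cell's medial triangle), and consecutive such segments are collinear; thus between reflections a beam is a straight segment in one of the three grid directions. Hence a beam turns only when it reflects off a boundary pane, where its direction changes by $\pm 60^\circ$, and two distinct beam-strands meet only at the midpoint of an internal pane, where they cross transversally at an angle of $60^\circ$ or $120^\circ$. In particular each cell is crossed by at most three beams, one per medial segment.

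\medskip

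\emph{Euler/region count.} Regard the union of the trajectories $\traj(c_k)$ as an arrangement of $\cyc(P)$ closed curves in the disk $\overline{P}$. Let $q$ be the number of transversal crossings (all at internal-pane midpoints) and $\kappa$ the number of connected components of the union. Applying Euler's formula to this arrangement (together with $\partial P$), the bounded complementary regions number $q+\kappa$; and since the ``crossing graph'' whose vertices are the $\cyc(P)$ cycles and whose edges are the crossings has $\kappa$ components, it has at least $\cyc(P)-\kappa$ edges, so $q+\kappa\geq \cyc(P)$. Thus the trajectories enclose at least $\cyc(P)$ regions. Moreover every enclosed region is a polygon whose corners are crossings with interior angles in $\{60^\circ,120^\circ\}$; writing $a,b$ for the numbers of $60^\circ$ and $120^\circ$ corners, the angle-sum identity forces $2a+b=6$.

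\medskip

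\emph{Discharging to area.} The core step converts ``at least $\cyc(P)$ enclosed regions, each of angle-weight $6$'' into ``at least $6\cyc(P)-6$ cells.'' I would give every cell charge $1$, so the total charge is $\area(P)$, and then redistribute charge from cells toward the enclosed regions and cycles, using the $60^\circ$ rigidity to bound how much charge each cell can emit, arranged so that each cycle accumulates net charge $6$ while the outer boundary absorbs a global deficit of $6$ (reflecting $\chi(\overline P)=1$). The prototype is the regular hexagon of six cells: its two cycles cross at the six spoke-midpoints and bound a central hexagonal region with $(a,b)=(0,6)$, giving $\area=6=6\cdot2-6$, with the six cells being exactly the charge required. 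Equality should force every cell to carry three beams and the enclosed regions to match this hexagonal prototype, and tracking this through the discharging should yield the characterization.

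\medskip

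The hard part will be the discharging rules themselves: making the local charge bounds hold at cells carrying only one or two beams and, above all, controlling the boundary cells so that the additive constant lands exactly on $-6$ rather than something weaker. This is precisely where the full force of the $60^\circ$ rigidity must enter, since the purely topological counting (Euler's formula and the $2a+b=6$ identity) yields only equalities and must be supplemented by the metric geometry to produce a genuine inequality.
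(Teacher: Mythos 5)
Your setup is sound as far as it goes: beams do run along medial segments of the cells, crossings do occur only at internal pane midpoints with angles $60^\circ$ or $120^\circ$, the Euler count of bounded regions of the trajectory arrangement ($q+\kappa$, hence at least $\cyc(P)$) is correct, and the angle identity $2a+b=6$ holds (with the minor caveat that corners of enclosed regions can also be $60^\circ$ bounce points on $\partial P$, not only crossings). But the proof stops exactly where it would have to begin. The entire quantitative content of the theorem --- why \emph{six} cells per cycle, and why the additive constant is exactly $-6$ --- is delegated to a discharging scheme whose rules you never state, whose local charge bounds you never prove, and whose boundary bookkeeping (the ``global deficit of $6$'') you explicitly flag as the unsolved hard part. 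Nothing in the proposal connects the enclosed regions to the number of cells: an enclosed region can be arbitrarily large or can share cells with other regions, so ``at least $\cyc(P)$ regions of angle-weight $6$'' does not translate into any lower bound on $\area(P)$ without the missing rules. As written, this is a plan for a proof, not a proof, and there is no evidence the plan closes: in the tight examples (trees of unit hexagons) the $-6$ arises from a global tree structure of hexagons glued along single panes, so any local discharging rule must implicitly detect this cut structure --- a genuinely global feature that a per-cell charge redistribution has no stated mechanism to see.

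For comparison, the paper's proof handles exactly this difficulty by making the cut structure explicit rather than local: it first reduces to \emph{primitive} grid polygons (those not expressible as $P_1\cup P_2$ with $P_1\cap P_2$ a single pane), using Corollary~\ref{lem:cut1} to show that cutting along a pane changes $\cyc$ additively, so that the $-6$ comes out of the gluing count $\cyc(P)=\sum_i\cyc(Q_i)-(r-1)$ over primitive pieces. For a primitive piece it then proves the stronger bound $\area\geq 6\cyc$ (Proposition~\ref{prop:indecomposable}) via a metric, not topological, mechanism: the identity $\area(P)=\tfrac23\sum_c\len(\traj(c))$ together with primitivity-forced length lower bounds (every trajectory segment has length at least $3/2$, triangular trajectories have length at least $15/2$, etc.), plus an induction that disposes of the boundary case of a triangular trajectory of length exactly $15/2$ and of three exceptional small polygons. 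If you want to salvage your approach, the length identity $\area(P)=\tfrac23\sum_c\len(\traj(c))$ is the cleanest replacement for your unspecified discharging: it is the statement that every cell's charge of $1$ is exactly the length of trajectory inside it times $2/3$, and it converts the theorem into lower bounds on trajectory lengths, which is where the $60^\circ$ rigidity and primitivity actually get used.
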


\begin{theorem}\label{thm:perim}
If $P$ is a grid polygon, then \[\perim(P)\geq \frac{7}{2}\cyc(P)-\frac{3}{2}.\]
\end{theorem}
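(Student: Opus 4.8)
The plan is to work directly with the trajectories and reduce the inequality to a statement about cycle lengths. If $\ell_1,\dots,\ell_c$ are the lengths of the $c=\cyc(P)$ cycles of $\pi_P$, then $\perim(P)=\sum_i\ell_i$, so $\perim(P)\ge\tfrac72 c-\tfrac32$ is equivalent to $\sum_i(2\ell_i-7)\ge-3$. I would begin by recording the kinematics of a beam: it travels along lines parallel to the three grid directions, passing straight through the midpoints of the interior panes it meets and turning by exactly $120^\circ$ each time it bounces off a boundary pane (a short coordinate computation pins down this turn angle). Each trajectory is therefore a closed polygonal curve all of whose exterior turning angles are $\pm120^\circ$; if it has turning number $t$, then its number of bounces $\ell$ satisfies $\ell\ge3|t|$, and $\ell=3$ forces $t=\pm1$ with all three turns of the same sign. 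In particular every cycle satisfies $\ell_i\ge3$, and a length-$3$ cycle is precisely an equilateral ``triangle trajectory'' bounding a triangular subregion of $P$.

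Since $2\ell_i-7\ge1$ whenever $\ell_i\ge4$ while $2\ell_i-7=-1$ for a triangle cycle, the target inequality reduces to the bounded-count statement
\[ c_3\ \le\ 3+\sum_{\ell_i\ge4}(2\ell_i-7), \]
where $c_3$ is the number of triangle cycles; informally, at most three triangle cycles are ``free,'' and any further ones must be compensated by the presence of longer cycles. I would first rule out the smallest possible triangle trajectory, the medial triangle of a single cell: since the boundary of $P$ does not self-intersect, all three sides of that cell cannot simultaneously be boundary panes unless $P$ is exactly one grid triangle, which is the base case $c=1$, $\perim(P)=3$. Hence for $c\ge2$ every triangle cycle has side length at least $2$, so each of its three sides is an interior chord of $P$ and $P$ contains cells on both sides of it.

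The crux---and what I expect to be the main obstacle---is to promote ``each triangle cycle is surrounded by more of $P$'' into the quantitative budget above. My intended mechanism is to apply Euler's formula to the planar arrangement $\Gamma$ whose edges are the beam segments, whose degree-$2$ vertices are the $\perim(P)$ bounce points, and whose degree-$4$ vertices are the transversal crossings of trajectories, and then to run a discharging argument that assigns each cycle the charge $2\ell_i-7$ and moves the deficit $-1$ of each triangle cycle onto the faces of $\Gamma$ adjacent to its bounding region, or onto the longer cycles that must appear nearby. The prototypical structural claim to establish is that four triangle trajectories cannot be packed into a simple grid polygon without forcing the existence of a cycle of length $\ge4$, and more generally that each triangle cycle beyond the third must be accompanied by enough extra length elsewhere; planarity is what should prevent the regions and crossings charged by different triangle cycles from being reused. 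The two technical hazards are non-simple or nested trajectories (self-crossings and turning number $\ne\pm1$), which must be tracked through the turning-number bookkeeping above, and crossings shared by two triangle cycles, which the discharging must be set up to avoid double-counting.

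Finally I would account for the additive constant. The slack $-\tfrac32$, equivalently the budget $3$ for triangle cycles, should be traced to a single distinguished trajectory---most naturally the outermost cycle hugging $\partial P$---whose incidence with the outer face of $\Gamma$ supplies the global term and anchors the count; assembling the local charge inequalities with this boundary contribution gives $\sum_i(2\ell_i-7)\ge-3$, which is the desired bound. As a consistency check I would verify the configurations with $c=3$ and $\perim(P)=9$ (three triangle trajectories), which should be exactly the equality cases and which show that neither the coefficient $\tfrac72$ nor the constant $3$ can be improved.
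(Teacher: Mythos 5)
Your opening reduction is correct: rewriting the target as $\sum_i(2\ell_i-7)\ge-3$ and isolating the triangle cycles (the only cycles with negative contribution, since every cycle has length $\ge 3$) identifies exactly what must be proved, and this is also where the paper's proof lives (its Lemma~\ref{lem:alpha_3_bound} proves $\alpha_3\le\alpha_4+\sum_{m\ge5}(m-2)\alpha_m$ for primitive polygons, which is then fed into a monotone-ratio computation and an induction over pieces glued along single panes). But everything after your reduction is a plan rather than a proof, and the plan as sketched does not close. Your discharging rule---move each triangle cycle's deficit $-1$ onto the faces or longer cycles nearby---fails quantitatively already at $4$-cycles: a $4$-cycle carries charge $2\cdot4-7=1$, yet its trajectory can be crossed by \emph{two} distinct triangle trajectories (glue two unit hexagons along a pane: both surviving triangle cycles cross the unique $4$-cycle), so a naive transfer over-charges it. The paper escapes this with two nontrivial facts for which your proposal has no substitute: (i) the trajectory of an $m$-cycle meets at most $m-2$ triangular trajectories (Theorem~\ref{thm:HypothesisA}, whose proof is the entire technical content of Section~\ref{sec:triangles}: shorelines, fragments, the $X$-region count of Lemma~\ref{lem:K_iThing}, and a winding-angle identity); and (ii) in a primitive polygon other than three exceptional shapes, a $3$-cycle whose trajectory meets only $3$- and $4$-cycles must meet at least \emph{two} $4$-cycles, so each such triangle effectively charges only $1/2$ to each, and the double count $2|Y|\le 2\alpha_4$ closes. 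Euler's formula applied to the arrangement of trajectories gives no control whatsoever over how many triangle trajectories can crowd a single long cycle; that control is precisely the hard geometric theorem, and it is missing from your argument.

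Two further points. Your treatment of the additive constant is misdirected: there is no ``outermost cycle hugging $\partial P$'' (trajectories do not follow the boundary), and in the paper the $-\tfrac32$ arises purely from gluing arithmetic in the induction---when $P=P_1\cup P_2$ with $P_1\cap P_2$ a single pane, $\perim$ drops by $2$ while $\cyc$ drops by $1$ (Corollary~\ref{lem:cut1}). Moreover, your closing consistency check rests on a false expectation: polygons with $\cyc(P)=3$, $\perim(P)=9$, and three triangle cycles are believed not to exist. The paper in fact proves the strict inequality $\perim(P)\ge\tfrac72\cyc(P)$ for primitive non-exceptional polygons and conjectures that the sharp bound is $4\cyc(P)-2$ (Conjecture~\ref{conj:4n-2}), so the coefficient $\tfrac72$ is expected to be improvable and equality never holds; a proof strategy anchored to the structure of equality cases is therefore aiming at configurations that do not occur. (A minor slip in the same vein: for $\cyc(P)\ge 2$ the smallest possible triangle trajectory has side length $\tfrac32$, as in the unit hexagon, not $2$.)
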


The inequality in Theorem~\ref{thm:area} is tight, and we will characterize the grid polygons that achieve equality. Define a \dfn{unit hexagon} to be a grid polygon that is a regular hexagon of side length $1$. Let us construct a sequence of grid polygons $(P_k)_{k\geq 1}$ as follows. First, let $P_1$ be a unit hexagon. For $k\geq 2$, let $P_k=P_{k-1}\cup Q_{k}$, where $Q_{k}$ is a unit hexagon such that $P_{k-1}\cap Q_{k}$ is a single pane. We call a grid polygon $P_k$ obtained in this manner a \dfn{tree of unit hexagons}; see Figure~\ref{FigBill2} for an example with $k=9$. Since $\cyc(Q_k)=2$ for all $k\geq 2$, one can combine Corollary~\ref{lem:cut1} from below with an easy inductive argument to see that $\cyc(P_k)=k+1$ for all $k\geq 1$. Thus, $\area(P_k)=6k=6\cyc(P_k)-6$. 

\begin{figure}[ht]
  \begin{center}\includegraphics[height=4.86cm]{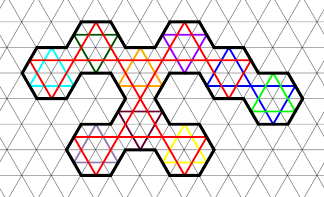}
  \end{center}
  \caption{A tree of unit hexagons $P_9$ with $\cyc(P_9)=10$, $\area(P_9)=54$, and $\perim(P_9)=38$.}\label{FigBill2}
\end{figure}

\begin{theorem}\label{thm:equality}
If $P$ is a grid polygon, then $\area(P)=6\cyc(P)-6$ if and only if $P$ is a tree of unit hexagons. 
\end{theorem}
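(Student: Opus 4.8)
The ``if'' direction is already carried out in the discussion preceding the statement: for a tree of unit hexagons $P_k$ one has $\cyc(P_k)=k+1$ and $\area(P_k)=6k$, so $\area(P_k)=6\cyc(P_k)-6$. My plan for the ``only if'' direction is an induction on $\area(P)$ driven by the cutting operation behind Corollary~\ref{lem:cut1}. The organizing idea is that the \emph{defect} $\delta(P):=\area(P)-6\cyc(P)+6$ is additive under single-pane cuts: if $P=P'\cup Q$ with $P'\cap Q$ equal to a single pane, then the cells of $P'$ and $Q$ are disjoint, so $\area(P)=\area(P')+\area(Q)$, while Corollary~\ref{lem:cut1} gives $\cyc(P)=\cyc(P')+\cyc(Q)-1$; combining these yields $\delta(P)=\delta(P')+\delta(Q)$. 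Since Theorem~\ref{thm:area} guarantees $\delta\ge 0$ for every grid polygon, it follows immediately that if $\delta(P)=0$, then $\delta$ vanishes on each piece of any single-pane cut of $P$.

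Call a pane $e$ a \emph{bridge} of $P$ if $P$ admits a single-pane cut along $e$, i.e.\ $P=P'\cup Q$ with $P'\cap Q=e$. Cutting $P$ repeatedly along all of its bridges terminates (each cut decreases the area) and produces bridgeless grid polygons $B_1,\dots,B_m$, the \emph{blocks} of $P$, whose incidence pattern along the cut panes is a tree. By the additivity above, $\delta(P)=\sum_i\delta(B_i)$, so $\delta(P)=0$ if and only if $\delta(B_i)=0$ for every block. In the reverse direction, one checks directly that inside a tree of unit hexagons the only bridges are the panes shared by adjacent hexagons, and that cutting along them recovers the individual hexagons; hence, if every block of $P$ is a unit hexagon, then reassembling the blocks one at a time along the bridges, following the block–cut tree, exhibits $P$ as a tree of unit hexagons in the sense of the inductive definition. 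The entire theorem therefore reduces to a statement about indecomposable pieces.

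\emph{Key Lemma (the main obstacle).} A bridgeless grid polygon $B$ satisfies $\delta(B)=0$ if and only if $B$ is a unit hexagon. The ``if'' part is the computation $\area=6$, $\cyc=2$, $\delta=0$. The ``only if'' part is the heart of the matter, and is where I expect the real difficulty to lie: I must show that the unit hexagon is the unique bridgeless grid polygon achieving $\delta=0$. The plan is to revisit the proof of Theorem~\ref{thm:area} and track exactly which of its inequalities are tight in the bridgeless setting. Concretely, I would work with the planar arrangement $T$ formed by the light-beam trajectories, whose vertices are the pane midpoints (interior panes give degree-$4$ crossings and boundary panes give degree-$2$ reflection points) and whose edges are the $3\,\area(P)$ cell midsegments, and combine Euler's formula for $T$ with the local fact that each reflection turns a beam by $120^\circ$, so that a convex trajectory closes up after exactly three reflections (as in the Star of David pattern made by the two triangular trajectories of a single hexagon). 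The expectation is that any bridgeless configuration other than the unit hexagon must contain a cell, an interior grid vertex, or a trajectory that is ``wasted'' relative to the extremal hexagonal packing and hence contributes a strictly positive amount to $\delta$. Making this accounting precise, so that it pins down the unit hexagon uniquely and rules out every exceptional bridgeless example, is the step I anticipate requiring the most care.
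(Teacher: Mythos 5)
Your reduction is sound and in fact mirrors the paper's own strategy exactly: the paper also decomposes $P$ into \emph{primitive pieces} (your ``blocks'') glued along single panes, uses Corollary~\ref{lem:cut1} to get additivity of the quantity $\area-6\cyc+6$, and thereby reduces both Theorem~\ref{thm:area} and Theorem~\ref{thm:equality} to a statement about primitive polygons. The problem is that everything after your reduction is missing. Your ``Key Lemma'' --- that the unit hexagon is the unique bridgeless grid polygon with $\delta=0$ --- is precisely the mathematical heart of the theorem, and you do not prove it; you only state a plan (Euler's formula applied to the arrangement of trajectories, plus an unspecified ``wasted cell/vertex/trajectory'' accounting) and acknowledge that making it precise is the hard part. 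A proof proposal that defers the central difficulty to a future accounting argument is not a proof.

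For comparison, the paper resolves exactly this point with Proposition~\ref{prop:indecomposable}: a primitive grid polygon that is not a rotation of one of the three exceptional polygons in Figure~\ref{FigBill10} satisfies the \emph{strict} bound $\area(P)\geq 6\cyc(P)$, so its defect is at least $6$; the three exceptions are then checked by hand, and only the unit hexagon has defect $0$. The proof of that proposition is not a soft Euler-characteristic count: it proceeds by induction on area using the identity $\area(P)=\frac{2}{3}\sum_{c}\len(\traj(c))$, lower bounds on trajectory lengths in primitive polygons (every segment has length at least $3/2$, triangular trajectories have length at least $15/2$, etc.), and a delicate case analysis, invoking Lemma~\ref{lem:general_cut}, for the situation where some triangular trajectory has length exactly $15/2$. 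Nothing in your sketch engages with these quantitative issues --- in particular, nothing in it would distinguish the unit hexagon from the other two primitive polygons of Figure~\ref{FigBill10} (which have defects $1$ and $4$), and your proposed invariants (degree counts in the trajectory arrangement, the $120^\circ$ turning rule) hold equally for all primitive polygons, so they cannot by themselves isolate the hexagon. You would need to supply an argument of comparable strength to Proposition~\ref{prop:indecomposable} for your Key Lemma before the proposal constitutes a proof.
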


On the other hand, we believe that Theorem~\ref{thm:perim} is not tight. After drawing several examples of grid polygons, we have arrived at the following conjecture. 

\begin{conjecture}\label{conj:4n-2}
If $P$ is a grid polygon, then \[\perim(P)\geq 4\cyc(P)-2.\]
\end{conjecture}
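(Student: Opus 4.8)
The plan is to attack the conjecture through the same cut-and-reduce philosophy that pins down $\cyc$ for trees of hexagons, isolating the extremal family as the fixed points of the reduction. Define the \emph{defect} $f(P)=\perim(P)-4\cyc(P)+2$, so that the conjecture asserts $f(P)\geq 0$. The first step is to record that $f$ is additive under the single-pane gluing operation. If $P=P_1\cup P_2$ with $P_1\cap P_2$ a single pane, then $\perim(P)=\perim(P_1)+\perim(P_2)-2$, while the cutting relation recorded in Corollary~\ref{lem:cut1} gives $\cyc(P)=\cyc(P_1)+\cyc(P_2)-1$; substituting these, the additive constants cancel and $f(P)=f(P_1)+f(P_2)$. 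Since a unit hexagon has $f=6-4\cdot 2+2=0$, this already explains why every tree of unit hexagons is an equality case, and it reduces the conjecture to proving $f(P)\geq 0$ for \emph{cut-irreducible} grid polygons, namely those with no interior pane whose removal separates $P$ into two grid polygons (equivalently, $2$-edge-connected reduced plabic graphs).

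For the irreducible case I would pass to the cycle lengths. Writing $r_c$ for the length of a cycle $c$ of $\pi_P$, each boundary pane lies on exactly one trajectory, so $\perim(P)=\sum_c r_c$ and the inequality becomes $\sum_c (4-r_c)\leq 2$. The only cycles contributing positively are the triangle trajectories with $r_c=3$, so the whole content is to bound the number of triangle trajectories against the excess lengths $\sum_{r_c\geq 5}(r_c-4)$, with a global slack of exactly $2$. To get structural control I would invoke a discrete Gauss--Bonnet identity for each closed trajectory: because the beam meets every pane at $60^\circ$, each bounce deflects it by exactly $\pm 120^\circ$, so a cycle with $s_+$ left turns and $s_-$ right turns satisfies $s_+-s_-=3w_c$ and $r_c=s_++s_-$, where $w_c$ is its winding number. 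A parity check then shows $r_c=3$ forces $w_c=\pm 1$ and all three turns of one sign, so every triangle trajectory is a genuine convex equilateral inscribed triangle, whereas $r_c=4$ forces $w_c=0$ (a non-convex ``zigzag''). This is exactly the dichotomy seen when gluing merges two triangle trajectories into a single winding-$0$ $4$-cycle, and it is what makes the count of triangle trajectories the sole obstruction.

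The heart of the argument should be a discharging scheme on the irreducible polygon that pays for each triangle trajectory. I would assign each triangle cycle charge $+1$ and attempt to route it either to a reflex feature of $\partial P$ or to one unit of excess length of an interacting non-triangular trajectory, the goal being to show that in a $2$-edge-connected polygon the triangle trajectories cannot be packed more densely than in a single hexagon without forcing compensating length elsewhere. Irreducibility is the crucial hypothesis here, since the gluing move is precisely what converts a surplus of triangle trajectories into longer merged cycles; the target constant $2$ is then forced by the two base cases $f(\text{hexagon})=0$ and $f(\text{triangle})=1$, and reducedness of the associated plabic graph should rule out the degenerate cycles of length $\leq 2$ or self-touching trajectories that would otherwise break the accounting.

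The main obstacle is exactly this irreducible case: controlling how many triangle trajectories an arbitrary $2$-edge-connected grid polygon can carry. Naive averaging fails outright, since $\perim=\sum_c r_c$ with $r_c$ as small as $3$ only yields $\perim\geq 3\cyc$, so everything hinges on a genuinely planar packing statement about mutually interacting convex triangle trajectories, for which I do not yet see a clean conserved quantity. The most promising refinement is to combine the turning-number bookkeeping above with the Euler relation $\perim(P)=\area(P)-2V_{\mathrm{int}}(P)+2$ (where $V_{\mathrm{int}}(P)$ counts interior grid vertices) and the sharp bound of Theorem~\ref{thm:area}: by the equality characterization of Theorem~\ref{thm:equality}, any irreducible $P$ other than a hexagon has a strictly positive area surplus $\area(P)-(6\cyc(P)-6)$, and the plan would be to convert that surplus into perimeter. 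Making this conversion quantitative, rather than merely qualitative, is the step I expect to be hardest.
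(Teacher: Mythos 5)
This statement is Conjecture~\ref{conj:4n-2}, which the paper explicitly leaves open---the authors prove only the weaker bound $\perim(P)\geq\frac{7}{2}\cyc(P)-\frac{3}{2}$ (Theorem~\ref{thm:perim})---and your proposal does not close it either; by your own account, the decisive step is missing. The parts you do establish are correct and coincide with the paper's framework: the additivity $f(P)=f(P_1)+f(P_2)$ of the defect $f(P)=\perim(P)-4\cyc(P)+2$ under single-pane gluing follows from Corollary~\ref{lem:cut1} exactly as you say (this is the same reduction to primitive polygons that opens the paper's proof of Theorem~\ref{thm:perim}), the identity $\perim(P)=\sum_c r_c$ is immediate, and your turning-number classification of $3$-cycles (convex, winding $\pm1$) and $4$-cycles (winding $0$) is sound. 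But after these reductions, the conjecture for a primitive polygon $P$ is precisely the inequality $\alpha_3\leq 2+\sum_{m\geq 5}(m-4)\alpha_m$, and no argument for it appears anywhere in your write-up. For comparison, the strongest packing statement the paper extracts from Theorem~\ref{thm:HypothesisA} is Lemma~\ref{lem:alpha_3_bound}, namely $\alpha_3\leq\alpha_4+\sum_{m\geq 5}(m-2)\alpha_m$: that bound lets $4$-cycles pay for triangles and carries coefficients $m-2$ rather than $m-4$, and optimizing it yields exactly the constant $\frac{7}{2}$, not $4$. So the ``discharging scheme on the irreducible polygon that pays for each triangle trajectory'' is not a technical refinement to be filled in later; it \emph{is} the conjecture, and you state plainly that you lack the conserved quantity or packing lemma needed to run it.

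The fallback you sketch at the end fares no better. Your Euler-type identity $\perim(P)=\area(P)-2V_{\mathrm{int}}(P)+2$ is correct, but it shows exactly why a merely qualitative area surplus cannot be ``converted'' into perimeter: interior grid vertices absorb area without contributing any perimeter, so the surplus $\area(P)-(6\cyc(P)-6)$ can be arbitrarily large while $\perim(P)$ stays comparatively small (already for a regular hexagonal region of side $k$ the area grows quadratically in $k$ while the perimeter grows linearly). Any honest conversion must therefore control $V_{\mathrm{int}}$, i.e., must return to the combinatorics of trajectories---which is again the missing packing statement. In short: your reductions are valid and essentially identical to the paper's, but at the point where the paper invokes Theorem~\ref{thm:HypothesisA} and settles for $\frac{7}{2}$, you need a strictly stronger, currently unproven bound. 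What you have is a reasonable research plan for an open problem, not a proof.
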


If Conjecture~\ref{conj:4n-2} is true, then it is tight. Indeed, if $P_k$ is a tree of unit hexagons as described above, then $\perim(P_k)=4k+2=4\cyc(P_k)-2$. 

Of fundamental importance in our analysis of the billiards system of a grid polygon $P$ are the \dfn{triangular trajectories} of $P$, which are just the trajectories of the $3$-cycles in $\pi_P$. One of the crucial ingredients in the proofs of Theorems~\ref{thm:area}--\ref{thm:equality} is the following result, which we deem to be noteworthy on its own.  

\begin{theorem}\label{thm:HypothesisA}
Let $P$ be a grid polygon, and let $c$ be a cycle of size $m$ in $\pi_P$. Then the trajectory $\traj(c)$ intersects at most $m-2$ triangular trajectories of $P$ (excluding $\traj(c)$ itself if $m=3$).
\end{theorem}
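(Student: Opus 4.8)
The plan is to translate the billiards dynamics into the language of \emph{strands} and then exploit the convexity of triangular trajectories. First I would record the basic geometric picture: every beam travels along one of three families of parallel lines (the center-lines of the rows of cells in each of the three grid directions), and two such lines meet exactly at pane midpoints. A trajectory is then a closed polygonal curve that passes straight through each \emph{interior} pane midpoint and turns by $\pm 120^\circ$ at each boundary pane. In particular a triangular trajectory bounds a genuine equilateral triangle $R_T$, either ``upward'' or ``downward'' pointing, whose three corners are boundary panes and whose three sides are unobstructed chords lying in the interior of $P$. Writing $\gamma=\traj(c)$, the goal is to show that $\gamma$ meets the boundaries of at most $m-2$ such triangles.

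The key lemma I would isolate first is that $R_T\subseteq P$ and that $\operatorname{int}(R_T)$ contains no boundary pane of $P$: since the three sides of $T$ are unobstructed and $P$ is simply connected, the boundary of $P$ cannot enter $R_T$ without interrupting one of those sides or meeting a corner of $T$. Consequently every turn-vertex of $\gamma$ lies strictly outside $\overline{R_T}$ (it is a boundary pane, and it is distinct from the corners of $T$ because distinct trajectories share no pane). This yields the crucial structural fact that \emph{$\gamma$ never turns inside a crossed triangle}: each connected arc of $\gamma\cap R_T$ is a single straight chord, entering through one side of $T$ and leaving through another, and since a chord parallel to a side cannot cross it, the chord always crosses the two sides not parallel to it. The same exterior-corner property applied to two triangular trajectories shows that if they had the same orientation and crossed, one would be forced to place a corner inside the other; hence two equal-orientation triangular trajectories are always nested or disjoint.

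With the straight-through property and this nesting structure, I would set up an injection from the crossed triangles into the turns of $\gamma$. Treating upward- and downward-pointing triangles separately, the crossed triangles of a fixed orientation form a laminar (nested/disjoint) family, which can be linearly ordered along $\gamma$; I would charge each crossed $T$ to the turn that $\gamma$ makes just after leaving the arc of $\gamma\cap R_T$ lying closest to the apex of $T$, where $\gamma$ must bend around the apex. Convexity of $R_T$ and the laminar structure should make this assignment injective, while the two turns of $\gamma$ that are extreme for a global sweep (those bordering the unbounded complementary region) are never charged, which is exactly what produces the constant $-2$. The hard part will be making this injectivity and accounting rigorous: a single long straight segment of $\gamma$ can pass through many disjoint triangles, and $\gamma$ may self-intersect (its turning number need not be $\pm 1$), so the collinear and nested families must be controlled carefully. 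I expect to resolve this either through a dedicated nesting lemma organizing the crossed triangles along each segment, or, more robustly, by an induction on $m$ that clips an extreme convex corner of $\gamma$ (replacing its two incident segments by a single shortcut in the third direction) and uses Corollary~\ref{lem:cut1} to match the decrease in $m$ against the triangles that thereby cease to be crossed.
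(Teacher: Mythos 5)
Your preparatory geometry is correct and matches facts the paper also establishes: the boundary of $P$ cannot enter the region bounded by a triangular trajectory, so $\traj(c)$ crosses such a region in straight chords, and two triangular trajectories of the same orientation cannot cross (in fact they cannot even nest, since the corners of the inner one would be boundary-pane midpoints trapped inside the region bounded by the outer one; compare Lemma~\ref{lem:triangle}). The genuine gap is the central counting step, and you have flagged it yourself: the proposed injection from crossed triangles into the turns of $\gamma=\traj(c)$ is neither shown to be well defined nor injective, and nothing you establish rules out the failure mode you mention. If a single straight segment of $\gamma$ is the closest-to-apex chord of several pairwise disjoint upward triangles (their apex panes can lie along one long horizontal stretch of the boundary), your rule charges all of them to the one turn at the end of that segment. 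Two further pieces of the accounting are also missing: you never explain how the upward and downward families share a single budget of $m-2$ turns (separate injections would only give $2(m-2)$), and the claim that two ``extreme'' turns are never charged is unsupported --- already in the unit hexagon, where $\gamma$ is a triangle and the one crossed triangular trajectory forms a hexagram with it, every turn of $\gamma$ lies on its convex hull, so ``extremeness'' alone cannot protect a turn from being charged. Your fallback induction is not viable as stated either: clipping a corner of $\gamma$ is surgery on the trajectory, not on the polygon, so the shortcut curve need not be the trajectory of any grid polygon, and Corollary~\ref{lem:cut1} applies to cutting $P$ along a pane, not to modifying light beams.

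The missing idea, as the paper's proof shows, is to run the turning-angle argument on $\partial P$ rather than on $\gamma$. The paper cuts $\partial P$ into $m$ shorelines (arcs between consecutive points of $\traj(c)\cap\partial P$), proves that each crossed triangular trajectory touches a given shoreline in at most one point (Lemma~\ref{lem:AtMost1Intersection}), and bounds the number of crossed triangles touching a shoreline $B$ by a quantity $K(B)$ determined by the turning angles of the fragments of $\traj(c)$ facing $B$ (Lemma~\ref{lem:K_iThing}, via a region-by-region point count). Since each triangular trajectory touches $\partial P$ in exactly three points, the number of crossed triangles is at most $\frac{1}{3}\sum_i K(B_i)$, and the identity $\sum_i\bigl(180-60K(B_i)\bigr)=360$ --- the total turning of the closed curve $\partial P$ --- gives exactly $m-2$. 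So the $-2$ comes from the turning number of the boundary of $P$, not from sparing two turns of $\gamma$; redirecting your charging scheme from the turns of $\gamma$ to touch points on shorelines of $\partial P$ is what makes the count close.
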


\subsection{Plabic Graphs}\label{subsec:plabic} 

A \dfn{plabic graph} is a planar graph $G$ embedded in a disc such that each vertex is colored either black or white. We assume that the boundary of the disc has $n$ marked points labeled clockwise as $b_1^*,\ldots,b_n^*$ so that each $b_i^*$ is connected via an edge to exactly one vertex of $G$. Following \cite{LamPostnikov}, we will also assume that every vertex of $G$ is incident to exactly $3$ edges, including edges connected to the boundary of the disc (the study of general plabic graphs can be reduced to this case). In his seminal article \cite{Postnikov}, Postnikov introduced plabic graphs---along with several other families of combinatorial objects---in order to parameterize cells in the totally nonnegative Grassmannian. These graphs have now found remarkable applications in a variety of fields such as cluster algebras, knot theory, polyhedral geometry, scattering amplitudes, and shallow water waves \cite{Arkani1, Fomin, Galashin, Kodama, LamPostnikov, Lukowski, Parisi, Shende}. 

Imagine starting at a marked boundary point $b_i^*$ and traveling along the unique edge connected to $b_i^*$. Each time we reach a vertex, we follow the \emph{rules of the road} by turning right if the vertex is black and turning left if the vertex is white. Eventually, we will reach a marked boundary point $b_{\pi(i)}^*$. The path traveled is called the \dfn{trip} starting at $b_i^*$. Considering the trips starting at all of the different marked boundary points yields a permutation $\pi=\pi_G:[n]\to[n]$ called the \dfn{trip permutation} of $G$. We say $G$ is \dfn{reduced} if it has the minimum number of faces among all plabic graphs with the same trip permutation. Figure~\ref{FigBill3} shows a reduced plabic graph $G$ whose trip permutation is the cycle $\pi_G=(1\,3\,5\,2\,4)$. 

\begin{figure}[ht]
  \begin{center}\includegraphics[height=5.2cm]{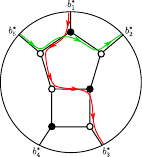}
  \end{center}
  \caption{A reduced plabic graph whose trip permutation is the cycle $(1\,3\,5\,2\,4)$. The fact that $\pi_G(1)=3$ is illustrated by the red trip starting at $b_1^*$ and ending at $b_3^*$. Similarly, $\pi_G(5)=2$ because the green trip starting at $b_5^*$ ends at $b_2^*$.}\label{FigBill3}
\end{figure}

Given a grid polygon $P$, one can obtain a reduced plabic graph $G(P)$ via a planar dual construction. Let us say an equilateral triangle with a horizontal side is \dfn{right-side up} (respectively, \dfn{upside down}) if its horizontal side is on its bottom (respectively, top). We refer to this property of a triangle (right-side up or upside down) as its \dfn{orientation}. Place a black (respectively, white) vertex at the center of each right-side up (respectively, upside down) equilateral triangular grid cell inside of $P$. Whenever two such grid cells share a side, draw an edge between the corresponding vertices. Finally, encompass $P$ in a disc, draw a marked point $b_i^*$ on the boundary of the disc corresponding to each boundary pane $b_i$ of $P$, and draw an edge from $b_i^*$ to the vertex drawn inside the unique grid cell that has $b_i$ as a side. See Figure~\ref{FigBill4}. 

\begin{figure}[ht]
  \begin{center}\includegraphics[height=5.416cm]{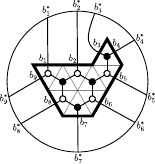}\qquad \includegraphics[height=5.416cm]{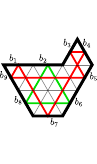}\qquad\includegraphics[height=5.416cm]{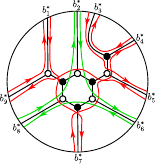}
  \end{center}
  \caption{On the left is a grid polygon $P$ overlaid with the corresponding plabic graph $G(P)$. The middle image shows the trajectories in the billiards system of $P$ to illustrate that its billiards permutation is $\pi_{P}=(1\,7\,4\,3\,5\,9)(2\,6\,8)$. The right image shows the trips of $G(P)$ to illustrate that its trip permutation is $\pi_{G(P)}=(1\,7\,4\,3\,5\,9)(2\,6\,8)$.  }\label{FigBill4}
\end{figure}

It is immediate from the relevant definitions that the trip permutation $\pi_{G(P)}$ is equal to the billiards permutation $\pi_P$. For example, if $P$ and $G(P)$ are as in Figure~\ref{FigBill4}, then $\pi_P=\pi_{G(P)}$ is the permutation with cycle decomposition $(1\,7\,4\,3\,5\,9)(2\,6\,8)$. 

\subsection{Membranes}

In the recent paper \cite{LamPostnikov}, Lam and Postnikov introduced \emph{membranes}, which are certain triangulated $2$-dimensional surfaces embedded in a Euclidean space. The definition of a membrane relies on a choice of an irreducible root system, and most of the discussion in \cite{LamPostnikov} centers around membranes of type $A$. They discussed how type $A$ membranes are in a sense dual to plabic graphs, and they further related type $A$ membranes to the theory of cluster algebras. A membrane is \dfn{minimal} if it has the minimum possible surface area among all membranes with the same boundary; Lam and Postnikov showed how to associate a reduced plabic graph $G(M)$ to each minimal type $A$ membrane $M$. They then defined the \dfn{essential dimension} of a reduced plabic graph $G_0$ to be the smallest positive integer $d$ such that there exists a minimal membrane $M$ of type~$A_d$ with $G(M)=G_0$. They proved that if $G_0$ has $n$ marked boundary points, then the essential dimension of $G_0$ is at most $n-1$, with equality holding if and only if there exists $k\in[n-1]$ such that $\pi_{G_0}(i)=i+k\pmod{n}$ for all $i\in[n]$ (this is equivalent to saying that $G_0$ corresponds to the top cell in the totally nonnegative Grassmannian $\text{Gr}_{k,n}^{\geq 0}$). Other than this result, there is essentially nothing known about essential dimensions of plabic graphs. Our original motivation for this project was to initiate the investigation of essential dimensions by studying plabic graphs of essential dimension $2$ in detail. 

Consider the class of triangulated surfaces in the triangular grid that can be obtained by iteratively wedging grid polygons. In other words, $Q$ is in this class if there are grid polygons $P_1,\ldots,P_k$ such that $P_{i+1}\cap(P_1\cup\cdots\cup P_i)$ is a single point for all $i\in[k-1]$ and such that $Q=P_1\cup\cdots \cup P_k$. In this case, we call the grid polygons $P_1,\ldots,P_k$ the \dfn{components} of $Q$. See Figure~\ref{FigBill5}. As mentioned in \cite{LamPostnikov}, the class we have just described is the same as the class of membranes of type $A_2$.  Such a membrane $M$ is automatically minimal (since it is determined by its boundary). In order to understand these membranes and their associated reduced plabic graphs, it suffices to understand grid polygons and their associated reduced plabic graphs. Indeed, the reduced plabic graphs associated to the components of $M$ are basically the same as the connected components of the reduced plabic graph associated to $M$; thus, restricting our focus to grid polygons is the same as restricting our focus to connected plabic graphs. Furthermore, if $M=P$ is a grid polygon, then the definition that Lam and Postnikov gave for the reduced plabic graph $G(M)$ associated to $M$ (viewed as a membrane) is exactly the same as the definition that we gave in Section~\ref{subsec:plabic} for the reduced plabic graph $G(P)$ associated to $P$ (viewed as a grid polygon). In other words, understanding plabic graphs of essential dimension $2$ and their trip permutations is equivalent to understanding grid polygons and their billiards permutations.

\begin{figure}[ht]
  \begin{center}\includegraphics[height=3.32cm]{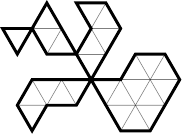}
  \end{center}
  \caption{A membrane of type $A_2$ with $5$ components.}\label{FigBill5}
\end{figure}

As a consequence of the preceding discussion, we can reformulate Theorems~\ref{thm:area} and~\ref{thm:perim} in the language of plabic graphs. 

\begin{corollary}\label{cor:plabic}
Let $G$ be a connected reduced plabic graph with essential dimension $2$. Suppose $G$ has $n$ marked boundary points and $v$ vertices, and let $c$ be the number of cycles in the trip permutation $\pi_G$. Then \[v\geq 6c-6\quad\text{and}\quad n\geq\frac{7}{2}c-\frac{3}{2}.\] 
\end{corollary}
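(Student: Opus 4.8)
The plan is to reduce the corollary directly to Theorems~\ref{thm:area} and~\ref{thm:perim} by exploiting the dictionary between connected reduced plabic graphs of essential dimension $2$ and grid polygons developed in the subsection on membranes. The first step is to produce, for the given $G$, a grid polygon $P$ with $G(P)=G$. Since $G$ has essential dimension $2$, there is a minimal membrane $M$ of type $A_2$ with $G(M)=G$; because $G$ is connected, $M$ has a single component and is therefore a grid polygon $P$. The construction of $G(M)$ due to Lam and Postnikov then coincides with the planar dual construction of $G(P)$ given in Section~\ref{subsec:plabic}, so $G=G(P)$.

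Next I would translate each of the three integers $n$, $v$, $c$ into the corresponding grid-polygon quantity. By construction, the marked boundary points $b_1^*,\ldots,b_n^*$ of $G(P)$ are in bijection with the boundary panes $b_1,\ldots,b_n$ of $P$, so $n=\perim(P)$. The (internal) vertices of $G(P)$ are precisely the centers of the triangular grid cells inside $P$, one per cell, so $v=\area(P)$. Finally, the trip permutation and the billiards permutation agree, $\pi_{G(P)}=\pi_P$, as noted immediately after the dual construction; hence they have the same number of cycles and $c=\cyc(P)$.

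With these identifications in place, the corollary is immediate: applying Theorem~\ref{thm:area} to $P$ gives $v=\area(P)\geq 6\cyc(P)-6=6c-6$, and applying Theorem~\ref{thm:perim} gives $n=\perim(P)\geq \frac{7}{2}\cyc(P)-\frac{3}{2}=\frac{7}{2}c-\frac{3}{2}$.

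The only genuinely delicate point---and the step I would write most carefully---is the first one: confirming that a connected reduced plabic graph of essential dimension $2$ really does arise as $G(P)$ for an honest grid polygon, rather than for some iterated wedge of several grid polygons (a membrane with more than one component). This rests on the equivalence, asserted in the subsection on membranes, between connectedness of the plabic graph and the membrane having a single component; everything else is a routine unwinding of definitions. In particular, no new geometric or combinatorial work is needed beyond the already-established Theorems~\ref{thm:area} and~\ref{thm:perim}.
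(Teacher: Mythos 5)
Your proposal is correct and follows exactly the paper's own route: produce a grid polygon $P$ with $G=G(P)$ via the membrane/plabic-graph dictionary, identify $n=\perim(P)$, $v=\area(P)$, $c=\cyc(P)$, and apply Theorems~\ref{thm:area} and~\ref{thm:perim}. The connectivity point you flag is precisely what the paper's ``preceding discussion'' (that components of a type $A_2$ membrane correspond to connected components of the associated plabic graph) is invoked to handle, so there is no substantive difference.
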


\begin{proof}
By the preceding discussion, there is a grid polygon $P$ such that $G=G(P)$. We have $\perim(P)=n$, $\area(P)=v$, and $\cyc(P)=c$, so the corollary follows from Theorems~\ref{thm:area} and~\ref{thm:perim}.
\end{proof}

\subsection{Outline}
Section~\ref{sec:triangles} is devoted to the proof of Theorem~\ref{thm:HypothesisA}. In Section~\ref{sec:area}, we apply Theorem~\ref{thm:HypothesisA} to prove Theorems~\ref{thm:area}, \ref{thm:perim}, and~\ref{thm:equality}. We believe that our work opens the door to several new combinatorial and geometric questions; we have collected many ideas for future work in Section~\ref{sec:conclusion}.  

\section{Triangles Intersecting a Trajectory}\label{sec:triangles}

Our goal in this section is to prove Theorem~\ref{thm:HypothesisA}. We begin with a lemma that establishes this theorem in the special case when $m=3$. 

\begin{lemma}\label{lem:triangle}
Suppose $\Delta$ is a triangular trajectory in the billiards system of a grid polygon $P$. There is at most one triangular trajectory $\Delta'$ in $P$ that intersects $\Delta$ and is not equal to $\Delta$. If such a trajectory $\Delta'$ exists, then its orientation must be opposite to that of $\Delta$.  
\end{lemma}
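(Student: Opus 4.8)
The plan is to set up coordinates adapted to the three grid directions, reduce the reflection dynamics to a few clean incidence statements, and then deduce both assertions from short combinatorial-geometric arguments.

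First I would record the basic geometry. Using three linear coordinates $a,b,c$ with $a+b+c=0$, normalized so that grid vertices are the integer points of this plane and pane midpoints are exactly the points with one integer coordinate and two half-integer coordinates, one checks that every beam travels along a line on which one of $a,b,c$ is a fixed half-integer. Hence a $3$-cycle bounds an equilateral triangle cut out by three lines $\{a=a_0\}$, $\{b=b_0\}$, $\{c=c_0\}$, and its orientation is recorded by the sign of the nonzero half-integer $s:=a_0+b_0+c_0$ (the triangle points up when $s<0$ and down when $s>0$). I would then prove three facts encoding the dynamics: (i) the filled triangle $\overline T$ of any triangular trajectory lies in $\overline P$, since $\partial\overline T\subseteq\overline P$ and the exterior of the simply connected region $P$ is connected; (ii) each corner of a triangular trajectory is a boundary-pane midpoint, and the cell lying just across that pane from $\overline T$ is disjoint from $P$; and (iii) two distinct trajectories can meet only at midpoints of panes. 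These yield two obstruction principles that drive everything: \textbf{(O1)} no corner of a trajectory lies in the interior of the filled triangle of another trajectory, because such a point lies on $\partial P$ yet would have a full neighborhood inside $\overline T\subseteq\overline P$; and \textbf{(O2)} no corner of a trajectory lies on a side of another, since a boundary pane cannot also be transparent and two trajectories cannot reflect off the same pane.

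For the orientation claim I would show that two triangular trajectories of the \emph{same} orientation cannot meet. The intersection of two up-triangles $\{a\ge a_0,\dots\}$ and $\{a\ge a_0',\dots\}$ is again an up-triangle, with sides on $\{a=\max(a_0,a_0')\}$, $\{b=\max(b_0,b_0')\}$, $\{c=\max(c_0,c_0')\}$. Each of these three bounds is inherited from one of the two triangles, so by pigeonhole two of them come from the same triangle, and the corner of the intersection determined by those two is a vertex of that triangle lying in the closed region of the other. By (O1) this vertex is not interior, and by (O2) it cannot lie on the boundary, a contradiction. Thus any triangular trajectory meeting $\Delta$ has orientation opposite to $\Delta$.

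For uniqueness, suppose $\Delta',\Delta''$ both meet $\Delta$. By the previous step both are opposite to $\Delta$, hence of a common orientation, so by that step they do not meet each other; thus they are nested or disjoint. If one is nested in the other, a corner of the inner trajectory lies in the filled triangle of the outer, contradicting (O1)/(O2). In the disjoint case I would use that a trajectory meeting the up-triangle $\Delta$ must cross it in a full hexagram: all six corners lie mutually outside (again by (O1)/(O2)), and so $\Delta\setminus\Delta'$ is exactly three pairwise-disjoint corner tips of $\Delta$, the tip at each corner being disjoint from the opposite side of $\Delta$. On the other hand $\Delta\cap\Delta''$ is connected and meets all three sides of $\Delta$, so it cannot be contained in the union of these three tips; therefore $\Delta\cap\Delta''$ meets $\Delta\cap\Delta'$, forcing $\Delta'\cap\Delta''\neq\varnothing$ and contradicting disjointness. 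Hence at most one trajectory meets $\Delta$.

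The step I expect to be the main obstacle is making facts (i)--(iii) fully rigorous from the reflection rule—in particular the claim that beam crossings occur only at pane midpoints and the precise identification, at each corner, of the exterior cell in (ii); once these local statements and the principles (O1), (O2) are in hand, the orientation and uniqueness arguments are short. Among the remaining steps, the hexagram/corner-tips decomposition used to rule out the disjoint case is the most delicate, and is where I would be most careful about degenerate (tangential) configurations, which I expect to collapse back to (O1)/(O2).
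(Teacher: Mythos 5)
Your proposal is correct and takes essentially the same approach as the paper's (much terser) proof: the paper's argument rests on exactly your observations that no corner of one trajectory can lie in the region bounded by another (corners lie on the boundary of $P$, while that region's interior lies in the interior of $P$), that this forces intersecting triangular trajectories to have opposite orientations, and that they must then cross in a six-point hexagram pattern, which rules out a third mutually intersecting triangle. Your coordinate setup, pigeonhole step, and connectedness argument simply supply the details that the paper leaves to the reader.
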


\begin{proof}
If two triangular trajectories intersect, then neither one can have a vertex in the interior of the region bounded by the other. This forces the two triangular trajectories to have opposite orientations. It also implies that every side of the first trajectory intersects the second trajectory and vice versa (i.e., the trajectories intersect in $6$ points). It follows from these observations that a triangular trajectory cannot intersect two other triangular trajectories.
\end{proof}

Let us fix some additional notation and terminology concerning trajectories. When we refer to a \emph{line segment}, we assume by default that it contains its endpoints and that it is not a single point. Let $P$ be a grid polygon, and let $c$ be an $m$-cycle in $\pi_P$. Since Lemma~\ref{lem:triangle} tells us that Theorem~\ref{thm:HypothesisA} is true when $m=3$, we will assume that $m\geq 4$. Let $z_1,\ldots,z_m$ be the points where the trajectory $\traj(c)$ intersects the boundary of $P$, listed clockwise around the boundary. For convenience, let $z_{m+1}=z_1$. Imagine traversing the boundary of $P$ clockwise, and let $B_i$ be the part of the boundary traversed between $z_i$ and $z_{i+1}$, including $z_i$ and $z_{i+1}$. We call each $B_i$ a \dfn{shoreline} of $\traj(c)$. Note that $\traj(c)$ is the union of $m$ line segments, each of which has its endpoints in $\{z_1,\ldots,z_m\}$. Let $C$ be the set of points where two of these line segments intersect each other (including $z_1,\ldots,z_m$). If we ``cut'' $\traj(c)$ at each point in $C$, we will break each of the $m$ line segments into smaller line segments that we call the \emph{fragments} of $\traj(c)$. More precisely, we say a line segment $f\subseteq\traj(c)$ is a \dfn{fragment} of $\traj(c)$ if the endpoints of $f$ belong to $C$ and if the relative interior of $f$ does not contain any points from $C$. We say a fragment $f$ \dfn{sees} a shoreline $B$ if there exist a point $p$ in the relative interior of $f$ and a point $p'$ in $B$ that is not an endpoint of $B$ such that the relative interior of the line segment $[p,p']$ lies inside of $P$ and does not contain any points from $\traj(c)$. If $f$ is a fragment of $\traj(c)$ that sees the shoreline $B$, then we write $E(f,B)$ for the equilateral triangle that has $f$ as one of its sides and that lies on the side of $f$ opposite to $B$. See Figure~\ref{FigBill6}. 

\begin{figure}[ht]
  \begin{center}\includegraphics[height=3.7cm]{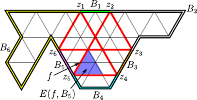}
  \end{center}
  \caption{In red is the trajectory of a $6$-cycle in the billiards system of a grid polygon. This trajectory has $12$ fragments. The $6$ shorelines are represented with different colors. Each of $B_1,B_3,B_5$ is seen by exactly $2$ of the fragments, while each of $B_2,B_4,B_6$ is seen by exactly $1$ of the fragments. We have labeled one of the fragments $f$ that sees $B_5$, and we have shaded the triangle $E(f,B_5)$ in blue. }\label{FigBill6}
\end{figure}

Fix a shoreline $B$, and let $f_1,\ldots, f_k$ be the fragments of $\traj(c)$ that see $B$. Then $f_1\cup\cdots\cup f_k\cup B$ is a piecewise-linear curve that bounds a polygonal region $R$. Let us assume that $f_1,\ldots,f_k$ are listed in clockwise order around $R$ so that $f_1$ and $f_k$ touch $B$. For each $j\in[k-1]$, let $\theta_j$ be the interior angle of $R$ at the point of intersection of $f_j$ and $f_{j+1}$. It is straightforward to see that $\theta_j$ is either $60^\circ$ or $120^\circ$. Let $K=K(B)=1+\sum_{j=1}^{k-1} \beta_j$, where $\beta_j=2$ if $\theta_j=60^\circ$ and $\beta_j=1$ if $\theta_j=120^\circ$. A schematic illustration of this situation is shown in Figure~\ref{FigBill7}. In that figure, we have  $\theta_1=\theta_3=120^\circ$ and $\theta_2=60^\circ$, so $\beta_1=\beta_3=1$, $\beta_2=2$, and $K(B)=1+(1+2+1)=5$. 

Imagine standing at the point $B\cap f_k$ and facing toward $B$. Walk along the shoreline $B$ to reach $B\cap f_1$; you should now be facing toward the shoreline that comes immediately after $B$ in clockwise order. The net change in your direction during this walk is $(180-60K)^\circ$ clockwise.\footnote{A net change of, say, $120^\circ$ clockwise is different from a net change of $-240^\circ$ clockwise. In the former case, you spun $120^\circ$ clockwise; in the latter case, you spun $240^\circ$ counterclockwise.} To see this, consider instead walking from $B\cap f_k$ to $B\cap f_1$ by traversing the fragments $f_k,\ldots,f_1$; this will result in the same net change in direction. You first turn $60^\circ$ clockwise to get onto the fragment $f_k$. Whenever you transfer from $f_{j+1}$ to $f_j$, you turn $(180^\circ-\theta_j)=(60\beta_j)^\circ$ counterclockwise, which is the same as $(-60\beta_j)^\circ$ clockwise. At the end, you turn another $60^\circ$ clockwise to get off of $f_1$ and face toward the next shoreline. Overall, your net change in direction is $60^\circ+\sum_{j=1}^{k-1}(-60\beta_j)^\circ +60^\circ=(180-60K)^\circ$ clockwise. In the example shown in Figure~\ref{FigBill7}, the net change of direction would be $60^\circ-60^\circ-120^\circ-60^\circ+60^\circ=-120^\circ=(180-60K)^\circ$ clockwise (i.e., $120^\circ$ counterclockwise).  

We are going to prove that the number of triangular trajectories that intersect $\traj(c)$ and touch $B$ is at most $K$. First, we need the following lemmas.

\begin{lemma}\label{lem:E(f,B)}
Preserve the notation from above. For each $i\in[k]$, the boundary of the grid polygon $P$ does not intersect the interior of $E(f_i,B)$. 
\end{lemma}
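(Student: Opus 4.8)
The plan is to prove the equivalent statement that $\operatorname{int}(E(f_i,B))\subseteq\operatorname{int}(P)$; this is equivalent to the lemma because $\operatorname{int}(E(f_i,B))$ is connected and always contains points near $f_i$ lying in $\operatorname{int}(P)$, so it meets $\partial P$ if and only if it fails to be contained in $\operatorname{int}(P)$. First I would normalize coordinates so that $f_i$ is horizontal, $B$ lies below the line containing $f_i$, and $E:=E(f_i,B)$ is the upward-pointing equilateral triangle erected on $f_i$. The bottom layer is easy: $f_i$ is a sub-segment of a single straight trajectory segment $S$, and the only panes $S$ meets along $f_i$ are interior panes crossed transversally, so every grid cell that $f_i$ passes through lies in $P$. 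Since the part of $E$ lying in the same row of cells as $f_i$ is contained in these cells, that part is already interior to $P$. Hence the entire content of the lemma lives in the cells of $E$ in higher rows, which appear precisely when $\len(f_i)>\tfrac12$.

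Next I would argue by contradiction: suppose a boundary pane meets $\operatorname{int}(E)$, and choose such a pane $b$ whose intersection with $E$ is as close to $f_i$ as possible, so that the part of $E$ between $f_i$ and $b$ is free of boundary panes and therefore lies in $P$, while $b$ borders a cell outside $P$. Every boundary pane is a mirror off which exactly one cycle of $\pi_P$ bounces; let $\tilde c$ be the cycle bouncing off $b$. Its reflected beams leave $b$ in grid directions pointing into $P$, that is, downward toward $f_i$. The crucial geometric point is that $E$ is a $60^\circ$ wedge over $f_i$ whose two non-horizontal sides run in the other two grid directions, so any grid-direction ray issued downward from an interior point of $E$ must leave $E$ through its base $f_i$. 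When $b$ is parallel to $f_i$ this forces both reflected beams of $\tilde c$ to strike the relative interior of $f_i$, so $\traj(\tilde c)$ crosses $f_i$ at two interior points $q_1,q_2$, arching over $[q_1,q_2]\subseteq f_i$ and touching $\partial P$ at $b$ in between; the case of a slanted $b$ produces at least one such crossing and should reduce to this picture.

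The hard part will be extracting a contradiction from this arch. If $\tilde c=c$ then $q_1,q_2$ are points of $C$ in the relative interior of $f_i$, contradicting that $f_i$ is a single fragment, so the whole difficulty is to exclude $\tilde c\neq c$. I expect this to follow from the reducedness of the plabic graph $G(P)$: the arc of $\traj(\tilde c)$ from $q_1$ to $q_2$ together with the segment $[q_1,q_2]$ of the $c$-trip $S$ bounds a region meeting $\partial P$ only along the mirror $b$, and I would try to identify this as a reducible double-crossing configuration that a reduced plabic graph cannot contain (dually, one that would let us shorten the billiards cycle). A second, possibly cleaner, avenue is to use the hypothesis that $f_i$ sees $B$ on the side of $f_i$ opposite to $E$: continuing the reflected beams of $\tilde c$ past $q_1,q_2$ into the region $R$ between $f_i$ and $B$ and exploiting the unobstructed line of sight to $B$ should again force a forbidden return of $\tilde c$ to $b$. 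Pinning down the exact forbidden sub-configuration and verifying it against the reducedness axioms is the step where I anticipate the genuine work, and it is the main obstacle to completing the proof.
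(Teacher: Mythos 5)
Your setup (normalizing $f_i$ horizontal, handling the case $\tilde c=c$ via points of $C$ in the relative interior of a fragment) is fine, but the case you yourself flag as the remaining obstacle---$\tilde c\neq c$---is where the entire content of the lemma lies, and the main route you propose for it cannot work. The ``arch'' configuration (two beams of a cycle $\tilde c$ bouncing off a boundary pane $b$ and crossing a straight segment of a different cycle $c$ at two points $q_1,q_2$, with the enclosed triangle meeting $\partial P$ only at the mirror) is \emph{not} forbidden in reduced plabic graphs: it already occurs in the unit hexagon, whose two triangular trajectories intersect in six points (as noted in the proof of Lemma~\ref{lem:triangle}), so that each side of one triangle is arched over by the other triangle, which bounces off the hexagon pane in between. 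Note also that the two crossings lie on two \emph{different} trips of $G(P)$, since the trip of $\tilde c$ terminates at the boundary vertex $b^*$ between $q_1$ and $q_2$; Postnikov's double-crossing criterion for reducedness concerns a single pair of trips and does not apply here. What makes the arch impossible in the setting of the lemma is exactly the hypothesis your main avenue never uses: $f_i$ sees the shoreline $B$, so $\traj(c)$ cannot enter the region $R$ between $f_1\cup\cdots\cup f_k$ and $B$. This is a global constraint on $c$ itself, not on $\tilde c$, and your secondary suggestion (pushing the beams of $\tilde c$ into $R$) does not exploit it either, because cycles other than $c$ may legitimately pass through the interior of $R$.

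This is the essential difference from the paper's proof, which follows $c$ rather than the cycle bouncing off the intruding pane: it takes a point $w^*$ of $\partial P$ inside $E(f_i,B)$ at minimal distance from $f_i$, forms the equilateral triangle $T_{w^*}$ with apex $w^*$ and base in $f_i$ (which contains no other boundary points, by minimality), and observes that $R\cup T_{w^*}$ is a wall disconnecting $P$ into a left and a right region. Since $\traj(c)$ must travel from the left endpoint of $f_i$ back around to its right endpoint while avoiding the interior of $R$, it would have to pass through $T_{w^*}$, and the grid geometry of beam directions makes any such passage impossible (crossing the base would put a point of $C$ in the relative interior of the fragment $f_i$). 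Beyond this main gap, your write-up has smaller unaddressed holes: the beams of $\tilde c$ emanate from the \emph{midpoint} of $b$, which need not lie in $E(f_i,B)$ even when $b$ meets its interior; for slanted $b$ one of the two beams is horizontal and exits $E(f_i,B)$ through a side rather than through $f_i$, so the ``reduces to this picture'' step is not justified; and in the bottom-layer argument, containment in cells of $P$ gives containment in $P$ but not in its interior, so boundary panes along the top edge of that row are not actually excluded by it.
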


\begin{proof}
Without loss of generality, we may assume that $E(f_i,B)$ is right-side up and has $f_i$ as its (horizontal) bottom side. Let $\mathcal{W}$ denote the intersection of the boundary of $P$ and the interior of $E(f_i, B)$. Suppose for the sake of contradiction that $\mathcal{W}$ is nonempty. It is not hard to see that there is a point $w^{\ast} \in \mathcal{W}$ whose distance to $f_i$ is the minimum among all points in $\mathcal{W}$. Let $T_{w^*}$ be the unique equilateral triangle that contains $w^*$ as a vertex and has one of its sides contained in $f_i$. By the minimality of the distance from $w^{\ast}$ to $f_i$, we observe that $T_{w^{\ast}}$ does not contain other points from the boundary of $P$ besides $w^{\ast}$. See Figure~\ref{FigBill13}.

Note that the space $P\setminus(R\cup T_{w^*})$ has two connected components: a left region whose closure contains the left endpoint of $f_i$ and a right region whose closure contains the right endpoint of $f_i$. Imagine following the trajectory $\traj(c)$ starting at the right endpoint of $f_i$ and continuing through the left endpoint of $f_i$. You will land in the left region. If you continue following the trajectory, you will eventually come back to the right endpoint of $f_i$, which is in the right region. Since $\traj(c)$ does not intersect the interior of $R$, it must travel from the left region to the right region through $T_{w^{\ast}}$. However, this means that there exists a horizontal boundary pane of $P$ inside $T_{w^{\ast}}$, which is a contradiction.
\end{proof}

\begin{figure}[ht]
\begin{center}\includegraphics[height=6.552cm]{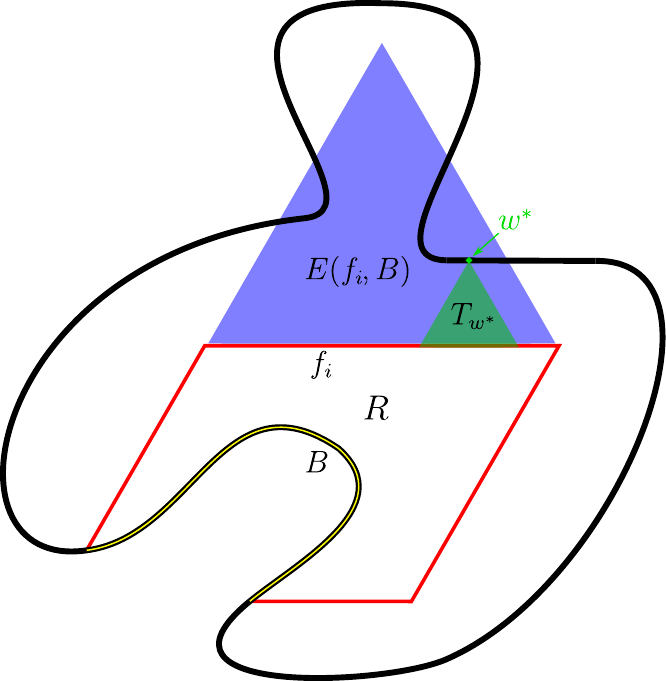}
  \end{center}
  \caption{In this schematic drawing, the boundary of $P$ passes through the interior of $E(f_i,B)$, where $B$ is the shoreline indicated by a thin yellow strip and $f_i$ is a fragment that sees $B$. The proof of Lemma~\ref{lem:E(f,B)} derives a contradiction from this setup. }\label{FigBill13}
\end{figure}

\begin{lemma}\label{lem:AtMost1Intersection}
Preserve the notation from above. If a triangular trajectory intersects $\traj(c)$ and touches $B$, then it touches $B$ at exactly $1$ point.  
\end{lemma}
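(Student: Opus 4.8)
The plan is to argue by contradiction: suppose the triangular trajectory $\Delta$ meets $\traj(c)$ yet touches $B$ at two of its vertices, say $v_1$ and $v_2$, with third vertex $v_3$. Let $s=[v_1,v_2]$ be the side of $\Delta$ joining them, and let $B'\subseteq B$ be the sub-arc from $v_1$ to $v_2$. The key feature of the hypothesis is that $v_1,v_2$ lie on a \emph{single} shoreline, so the interior of $B'$ contains none of the crossing points $z_1,\ldots,z_m$; consequently $s\cup B'$ bounds a region $R'\subseteq P$ whose only part on $\partial P$ is $B'$. As a preliminary I would record, exactly as in the proof of Lemma~\ref{lem:triangle}, that at each vertex of $\Delta$ the boundary pane bisects the exterior $120^\circ$ angle, so $\partial P$ never enters the interior of $\Delta$; hence $\mathrm{int}(\Delta)\subseteq\mathrm{int}(P)$ and no $z_i$ lies in $\mathrm{int}(\Delta)$.

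Next I would pin down the chord structure of $\traj(c)$ inside $\Delta$. First, $\traj(c)$ cannot cross $s$: any maximal sub-arc of $\traj(c)$ lying in $R'$ is a straight segment, since $\traj(c)$ can only reflect on $\partial P$ and the interior of $B'$ carries no crossing point; such a segment cannot terminate on $B'$, so both of its endpoints lie on $s$, forcing it to lie along $s$, which is impossible because $v_1,v_2$ are interior to $B$ and hence are not endpoints of any segment of $\traj(c)$. Therefore every maximal piece of $\traj(c)$ inside $\mathrm{int}(\Delta)$ is a straight chord avoiding $s$, and among the three grid directions the only such chords are those parallel to $s$, with one endpoint on $[v_1,v_3]$ and one on $[v_2,v_3]$. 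Since parallel chords cannot meet, $\traj(c)$ has no self-intersection inside $\Delta$.

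Finally I would derive the contradiction from Lemma~\ref{lem:E(f,B)}. Because $\Delta$ meets $\traj(c)$, at least one such chord exists; take the one of least height, with endpoints $p\in[v_1,v_3]$ and $p'\in[v_2,v_3]$. Dropping a short segment from the relative interior of this lowest chord downward across $s$ into $R'$ reaches the interior of $B'$ without meeting $\traj(c)$, so the fragment $f$ carrying this chord sees $B$, and $E(f,B)$ is the equilateral triangle built on $f$ on the side away from $B$, namely the side containing $v_3$. The points $p,p'$ are intersections of $\traj(c)$ with the \emph{distinct} trajectory $\Delta$, not self-intersections of $\traj(c)$; combined with the absence of self-intersections of $\traj(c)$ in $\mathrm{int}(\Delta)$, this forces the endpoints of $f$ to lie strictly beyond $p$ and $p'$. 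A direct comparison of the two slanted sides of $E(f,B)$ with the sides $[v_1,v_3]$ and $[v_2,v_3]$ then shows that $E(f,B)$ contains the apex $v_3$ in its interior; but $v_3\in\partial P$, contradicting Lemma~\ref{lem:E(f,B)}.

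The step I expect to be the main obstacle is the last one, and specifically the degenerate possibility that an endpoint of the fragment $f$ coincides with $p$ or $p'$ (a triple point at which $\traj(c)$ crosses both itself and $\Delta$), which would place $v_3$ on the boundary rather than the interior of $E(f,B)$. I would rule this out by noting that a second segment of $\traj(c)$ through such a point cannot enter $\mathrm{int}(\Delta)$ (it would have to be a chord not parallel to $s$, which we have excluded) and cannot run along a side of $\Delta$ without two distinct trajectories sharing a segment; any remaining borderline configuration can be eliminated by perturbing the reference point along the lowest chord into the relative interior of $f$.
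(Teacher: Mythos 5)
Your proposal is correct and takes essentially the same route as the paper's proof: reduce to the case of two vertices $v_1,v_2$ of $\Delta$ lying on $B$, observe that $\traj(c)$ cannot meet the side $[v_1,v_2]$ so that every piece of $\traj(c)$ inside $\Delta$ is a chord parallel to that side, and then contradict Lemma~\ref{lem:E(f,B)} by showing the apex $v_3\in\partial P$ lies in the interior of $E(f,B)$. The extra care you take (verifying that the fragment sees $B$, and ruling out triple points so that $f$ strictly overhangs the chord) only fills in details the paper leaves implicit, rather than changing the argument.
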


\begin{proof}
Let $\Delta$ be a triangular trajectory that intersects $\traj(c)$ and touches $B$. Let $f$ be a fragment of $\traj(c)$ that intersects $\Delta$. Let $a_1,a_2,a_3$ be the vertices of $\Delta$. It is easy to see that $\Delta$ cannot have all $3$ of its vertices on $B$. Now suppose $\Delta$ has exactly $2$ of its vertices, say $a_1$ and $a_2$, on $B$. By rotating $P$ if necessary, we may assume $[a_1,a_2]$ is a horizontal line segment. The boundary of $P$ does not pass through the interior of the region bounded by $\Delta$; combining this with the observation that $\traj(c)$ does not intersect $[a_1,a_2]$; we find that every fragment of $\traj(c)$ passing through the interior of the region bounded by $\Delta$ must be horizontal. It follows that $f$ is a horizontal line segment that intersects $[a_1,a_3]$ and $[a_2,a_3]$. 
However, this forces $a_3$ to be in the interior of $E(f,B)$, contradicting Lemma~\ref{lem:E(f,B)}.     
\end{proof}

\begin{lemma}\label{lem:K_iThing}
Preserve the notation from above. There are at most $K(B)$ triangular trajectories in the billiards system of $P$ that intersect $\traj(c)$ and touch $B$. 
\end{lemma}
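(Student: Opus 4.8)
The plan is to bound the number of counted triangular trajectories by relating each one to the net turning $(180-60K(B))^\circ$ computed just before the lemma. First I would pin down the local structure of a single such trajectory. Let $\Delta$ be a triangular trajectory that intersects $\traj(c)$ and touches $B$. By Lemma~\ref{lem:AtMost1Intersection}, $\Delta$ meets $B$ in exactly one point $q$; I would then argue that $q$ is in fact a vertex of $\Delta$ (an edge of $\Delta$ can meet $\partial P$ only by grazing a reflex corner, and this possibility must be excluded), so that the $60^\circ$ angle of $\Delta$ at $q$ opens into the interior of $P$. Since $\Delta$ also meets $\traj(c)$, it must leave the region $R$, hence it crosses the fragment chain $f_1\cup\cdots\cup f_k$. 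Lemma~\ref{lem:E(f,B)} then shows that the portion of $\Delta$ protruding past the crossed fragment $f_i$ lies in the boundary-free triangle $E(f_i,B)$, which rigidly constrains how $\Delta$ is seated relative to $f_i$ and to $B$.

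Next I would order the counted triangular trajectories $\Delta_1,\ldots,\Delta_t$ by the positions of their vertices $q_1,\ldots,q_t$ as one traverses $B$ from $B\cap f_k$ to $B\cap f_1$, i.e. in the same direction as the turning walk described before the lemma. The heart of the argument is to show that each $\Delta_s$ forces a definite amount of turning of $B$ near $q_s$: the $60^\circ$ wedge that $\Delta_s$ occupies at $q_s$, together with the constraint from $E(f_i,B)$ that $\Delta_s$ can escape $R$ only through a fragment, should contribute one $60^\circ$ counterclockwise increment to the net turn accumulated along the walk. Matching the total of these forced increments against the exact value $(180-60K(B))^\circ$ of the net clockwise turn then yields $t\le K(B)$.

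An equivalent and possibly cleaner bookkeeping uses the identity $K(B)=k+\#\{j:\theta_j=60^\circ\}$, which follows from $\sum_{j=1}^{k-1}\beta_j=(k-1)+\#\{j:\theta_j=60^\circ\}$. In this formulation I would define a charging map sending each $\Delta_s$ either to the fragment $f_i$ it crosses or, when its crossing occurs at a sharp corner of the chain, to the adjacent $60^\circ$ corner; I would then argue, again via Lemmas~\ref{lem:E(f,B)} and~\ref{lem:AtMost1Intersection}, that no fragment and no $60^\circ$ corner receives more than one trajectory, so that $t\le k+\#\{j:\theta_j=60^\circ\}=K(B)$.

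The main obstacle I anticipate is the turning/charging bookkeeping itself: translating the rigid but local constraints on each $\Delta_s$ (its single vertex on $B$, its $60^\circ$ wedge, and its forced escape through a fragment into the clear triangle $E(f_i,B)$) into a clean global accounting that exactly matches $(180-60K(B))^\circ$, while correctly handling the two endpoints of the walk (the initial and final $60^\circ$ turns onto and off of the chain) and ruling out two distinct trajectories crowding into the same fragment or corner. Establishing this injectivity—equivalently, that distinct $\Delta_s$ cannot share a single unit of turning—is where the combination of Lemma~\ref{lem:E(f,B)} with the opposite-orientation phenomenon from Lemma~\ref{lem:triangle} should do the decisive work.
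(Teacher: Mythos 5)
Your proposal starts with the right ingredients (each counted trajectory $\Delta$ meets $B$ in a single point, which is a vertex of $\Delta$, and $\Delta$ must escape the region $R$ through the fragment chain), but its core rests on two claims that do not hold up. First, the assertion that Lemma~\ref{lem:E(f,B)} shows ``the portion of $\Delta$ protruding past the crossed fragment $f_i$ lies in the boundary-free triangle $E(f_i,B)$'' is backwards. Lemma~\ref{lem:E(f,B)} says that $\partial P$ avoids the \emph{interior} of $E(f_i,B)$; since the two far vertices of $\Delta$ are points of $\partial P$ (midpoints of boundary panes), those vertices must lie \emph{outside} the interior of $E(f_i,B)$, so the protruding part of $\Delta$ typically extends beyond $E(f_i,B)$ rather than sitting inside it. (Indeed, when $\Delta$ has orientation opposite to $E(f_i,B)$ --- the generic case, by Lemma~\ref{lem:triangle} --- the protruding part widens as it moves away from $f_i$ and is forced to escape through the slanted sides of $E(f_i,B)$.) The rigidity you want is the reverse of what you wrote, and the turning-angle mechanism built on it (``each $\Delta_s$ contributes one $60^\circ$ counterclockwise increment'') is never substantiated: the net turn $(180-60K(B))^\circ$ is a property of the shoreline's shape alone, and you give no argument by which a trajectory vertex sitting at a pane midpoint of $B$ forces any turning.

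Second, the injective charging (each $\Delta_s$ charged to the fragment it crosses or to an adjacent $60^\circ$ corner, with no fragment or corner charged twice) \emph{is} the content of the lemma, and it is left entirely unproven --- you yourself flag it as the main obstacle. It is not even clearly well posed: a trajectory can straddle a $120^\circ$ corner of the chain, with one edge through $f_i$ and the other through $f_{i+1}$, and your scheme has no charge available at $120^\circ$ corners, so you would have to charge it to a fragment and then separately exclude a second trajectory properly crossing that fragment; each such case needs its own geometric argument. The paper sidesteps all per-fragment bookkeeping with a global count: it extends the three sides of each $E(f_i,B)$ to maximal chords of $P$, forming a set $X$, and partitions $P$ into $X$-regions. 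Each counted trajectory has three vertices on $\partial P$; exactly one lies in $R$ (by Lemma~\ref{lem:AtMost1Intersection}), none lies in any $E(f_i,B)$ (by Lemma~\ref{lem:E(f,B)}), and the remaining $2t$ vertices land in the other hospitable regions $R',R'',U_1,\ldots,U_{K(B)-1}$, whose capacities $1,1,2,\ldots,2$ sum to $2K(B)$; hence $3t\le t+2K(B)$, i.e., $t\le K(B)$. Note that the capacity-$2$ regions $U_j$ may legitimately receive vertices of two \emph{different} trajectories, so the paper never needs --- and does not prove --- the one-trajectory-per-fragment statement your plan requires. To salvage your approach you would have to prove that injectivity case by case (proper crossings, $60^\circ$ straddles, $120^\circ$ straddles), which is a substantial geometric argument the proposal does not supply.
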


\begin{proof}
We illustrate the proof in Figure~\ref{FigBill7}. For each $i\in[k]$ and each side $s$ of the triangle $E(f_i,B)$, it follows from Lemma~\ref{lem:E(f,B)} that there is a unique line segment $L(s)$ containing $s$ that does not pass through the exterior of $P$ and whose endpoints are on the boundary of $P$. Let \[X=\bigcup_{i=1}^k\bigcup_{\substack{s\text{ a side}\\ \text{of }E(f_i,B)}}L(s).\] Define an \dfn{$X$-region} to be the closure of a connected component of $P\setminus X$; let $\mathcal R_X$ be the set of $X$-regions. Say an $X$-region $V$ is \dfn{hospitable} if it contains at least one side of at least one of the triangles $E(f_1,B),\ldots,E(f_k,B)$; otherwise, say $V$ is \dfn{inhospitable}. Note that $E(f_1,B),\ldots,E(f_k,B)$ are $k$ different hospitable $X$-regions. It is straightforward to see that an $X$-region is of the form $E(f_i,B)$ for some $i\in[k]$ if and only if it does not contain a line segment in the boundary of $P$. Let $R$ be the $X$-region whose boundary is $f_1\cup\cdots\cup f_k\cup B$. Let $R'$ (respectively, $R''$) be the unique $X$-region other than $R$ that contains the point $f_1\cap B$ (respectively, $f_k\cap B$). Note that $R$, $R'$, and $R''$ are hospitable. One can readily check that there are exactly $K(B)-1$ hospitable $X$-regions in $\mathcal R_X\setminus\{E(f_1,B),\ldots,E(f_k,B),R,R',R''\}$; let $U_1,\ldots,U_{K(B)-1}$ be these $X$-regions. Finally, let $I_1,\ldots,I_\ell$ be the inhospitable $X$-regions. 

Let $t$ be the number of triangular trajectories that intersect $\traj(c)$ and touch $B$. Let $p_1,\ldots,p_{3t}$ be the points where these triangular trajectories touch the boundary of $P$. For each $X$-region $V$, let $g(V)=|V\cap\{p_1,\ldots,p_{3t}\}|$. Each of the points $p_1,\ldots,p_{3t}$ belongs to exactly one $X$-region, so $\sum_{V\in\mathcal R_X}g(V)=3t$. It follows from Lemma~\ref{lem:AtMost1Intersection} that $g(R)=t$. Lemma~\ref{lem:E(f,B)} immediately implies that $g(E(f_i,B))=0$ for all $i\in[k]$. Furthermore, using Lemma~\ref{lem:E(f,B)}, one can readily check that $g(R')\leq 1$, $g(R'')\leq 1$, $g(U_j)\leq 2$ for all $j\in[K(B)-1]$, and $g(I_h)=0$ for all $h\in[\ell]$. Thus, \[3t=\sum_{V\in\mathcal R_X}g(V)\leq t+1+1+\sum_{i=1}^k0+\sum_{j=1}^{K(B)-1}2+\sum_{h=1}^\ell 0=t+2K(B).\]  Hence, $t\leq K(B)$. 
\end{proof}

\begin{figure}[ht]
  \begin{center}\includegraphics[height=8cm]{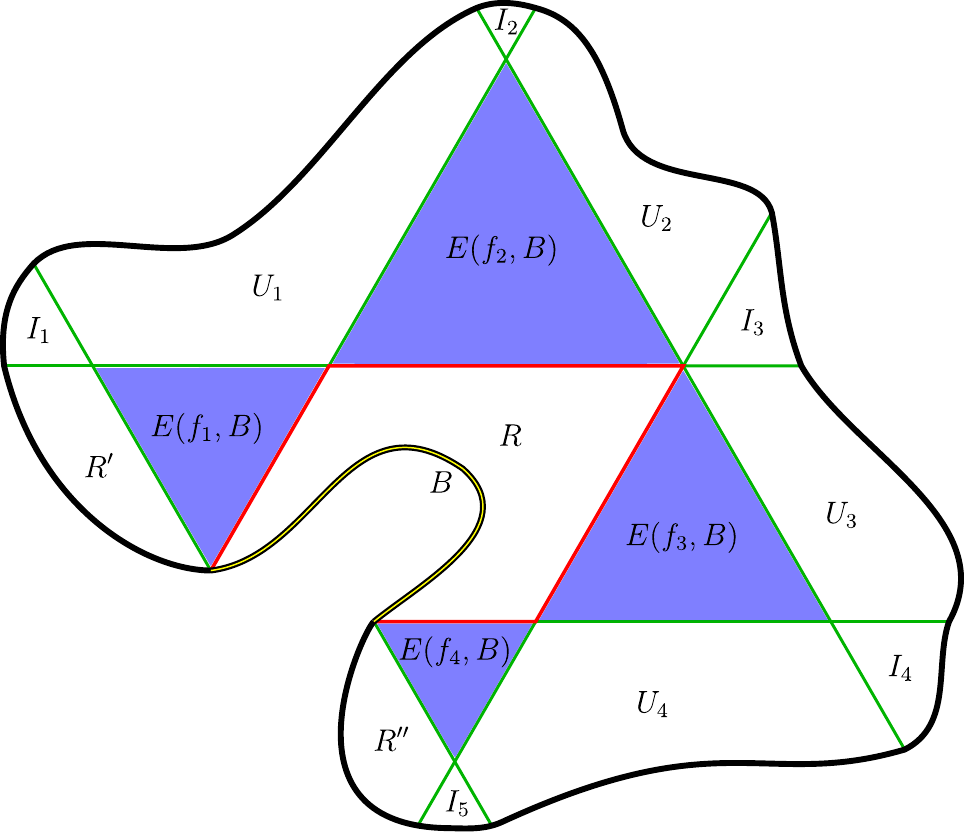}
  \end{center}
  \caption{A schematic illustration of the proof of Lemma~\ref{lem:K_iThing}. The curvy black curve is meant to represent the boundary of $P$. The red line segments are the fragments $f_1,f_2,f_3,f_4$, and the shoreline $B$ is marked with a thin yellow strip. The set $X$ is the union of the red and green line segments. In this example, $K(B)=5$. }\label{FigBill7}
\end{figure}

We are now in a position to complete the proof of Theorem~\ref{thm:HypothesisA}. 

\begin{proof}[Proof of Theorem~\ref{thm:HypothesisA}]
Let $P$ be a grid polygon, and let $c$ be an $m$-cycle in $\pi_P$. If $m=3$, then Theorem~\ref{thm:HypothesisA} follows from Lemma~\ref{lem:triangle}, so we may assume $m\geq 4$. Let $B_1,\ldots, B_m$ be the shorelines of $\traj(c)$. For each shoreline $B_i$, we define the integer $K(B_i)$ as above. Lemma~\ref{lem:K_iThing} tells us that there are at most $K(B_i)$ triangular trajectories that intersect $\traj(c)$ and touch $B_i$, and Lemma~\ref{lem:AtMost1Intersection} tells us that each such triangular trajectory touches $B_i$ in exactly $1$ point. Therefore, the total number of points where the triangular trajectories that intersect $\traj(c)$ touch the boundary of $P$ is at most $\sum_{i=1}^mK(B_i)$. Since each triangular trajectory touches the boundary of $P$ in exactly $3$ points, we deduce that the total number of triangular trajectories that intersect $\traj(c)$ is at most $\frac 13\sum_{i=1}^mK(B_i)$. Hence, the proof will be complete if we can show that $\frac 13\sum_{i=1}^mK(B_i)=m-2$. 

Preserve the notation from above. Imagine traversing the boundary of $P$ clockwise, starting and ending at $z_1$. We saw in our discussion above that the net change in your direction when you traverse the shoreline $B_i$ is $(180-60K(B_i))^\circ$ clockwise. Thus, the net change in your direction when you traverse the entire boundary of $P$ is $\sum_{i=1}^m(180-60K(B_i))^\circ$ clockwise. But this net change must be $360^\circ$, so  $\sum_{i=1}^m(180-60K(B_i))=360$. Manipulating this equation yields $\frac 13\sum_{i=1}^mK(B_i)=m-2$, as desired. 
\end{proof}

\section{Areas and Perimeters}\label{sec:area}

We will find it useful to break grid polygons into smaller grid polygons; the following lemma allows us to understand the effect that this has on the enumeration of the cycles in the associated billiards systems. 

\begin{lemma}\label{lem:general_cut}
Let $P$ be a grid polygon, and suppose $P=P_1\cup P_2$, where $P_1$ and $P_2$ are grid polygons such that $P_1\cap P_2$ is a union of $\eta$ different panes. Let $\delta_i$ be the number of different trajectories in the billiards system of $P_i$ that touch $P_1\cap P_2$. Then $\cyc(P)\leq\cyc(P_1)+\cyc(P_2)-\delta_1-\delta_2+\eta$. 
\end{lemma}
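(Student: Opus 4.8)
The plan is to track individual light-beam trajectories as the $\eta$ shared panes of $P_1\cap P_2$, which are mirrors in the billiards systems of $P_1$ and of $P_2$, turn into transparent windows in the billiards system of $P$. First I would split the trajectories of $P$ into those that never touch $P_1\cap P_2$ and those that do. A trajectory of $P$ that avoids $P_1\cap P_2$ must lie entirely in the interior of $P_1$ or entirely in the interior of $P_2$, since passing from one region to the other requires crossing a shared pane; moreover, on that region its dynamics coincide with the billiards dynamics of the corresponding $P_i$ (the relevant boundary panes of $P_i$ are also boundary panes of $P$). This sets up a bijection between the trajectories of $P$ avoiding $P_1\cap P_2$ and the trajectories of $P_1$ and of $P_2$ avoiding $P_1\cap P_2$, of which there are exactly $(\cyc(P_1)-\delta_1)+(\cyc(P_2)-\delta_2)$. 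It therefore suffices to prove that the number of trajectories of $P$ that \emph{do} touch $P_1\cap P_2$ is at most $\eta$.

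Next I would analyze the local picture at a single shared pane. Each such pane is a boundary pane of $P_1$ and of $P_2$, so it lies on exactly one trajectory of $P_1$ and exactly one trajectory of $P_2$ (each boundary pane lies on a unique trajectory). In the systems of $P_1$ and $P_2$ the beam reflects off the pane, producing a corner whose two fragments lie on the respective side; in the system of $P$ the beam instead passes straight through the midpoint, so the four fragments incident to the pane recombine into two straight segments that cross transversally at the midpoint. The key structural point to record is that the fragments of the touching trajectories lying strictly between consecutive visits to shared panes are common to the $P_i$-systems and to the $P$-system, and each such fragment is \emph{pure}: both of its endpoints are midpoints of shared panes, and it is a fragment of a single $P_1$- or $P_2$-trajectory. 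Making the shared panes into windows only alters how these pure fragments are paired up at each shared pane.

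The crux is a short counting observation. Each shared pane is crossed by exactly two of the straight segments described above, so the total number of incidences between touching trajectories of $P$ and the shared panes equals $2\eta$. On the other hand, every trajectory of $P$ that touches $P_1\cap P_2$ meets it at least twice: just after crossing a shared pane, such a trajectory travels along a fragment that is part of one of the trajectories of $P_1$ or $P_2$, and by purity the far endpoint of that fragment is again the midpoint of a shared pane, giving a second incidence. Hence the number of touching trajectories of $P$ is at most $2\eta/2=\eta$. Combining this with the count of the non-touching trajectories yields $\cyc(P)\le(\cyc(P_1)-\delta_1)+(\cyc(P_2)-\delta_2)+\eta$, which is the claimed bound.

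The step requiring the most care is the geometric bookkeeping at the shared panes: one must verify, using the rigidity of the $60^\circ$ triangular-grid dynamics, that a beam of $P$ really does pass straight through the midpoint of a transparent shared pane, and that this straight passage is precisely the recombination of the two reflecting corners coming from the $P_1$- and $P_2$-trajectories at that pane. Once this local picture and the purity of fragments are in hand, the global ``at least two crossings'' count is immediate. One mildly delicate point is the possibility that a single trajectory of $P$ passes through the same shared pane more than once; this causes no difficulty, because the argument bounds the total number of (trajectory, crossing) incidences rather than assuming that the shared panes crossed by a given trajectory are distinct.
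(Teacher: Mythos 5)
Your proof is correct and follows essentially the same route as the paper's: split the trajectories of $P$ into those avoiding $P_1\cap P_2$ (which correspond bijectively to the non-touching trajectories of the $P_i$) and those touching it, then bound the latter by $\eta$. The only difference is that the paper dismisses the bound of $\eta$ touching trajectories as ``straightforward to see,'' whereas you supply the double-counting argument ($2\eta$ total crossings of shared panes, at least $2$ per touching trajectory), which is a welcome filling-in of that detail.
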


\begin{proof}
For each $i\in\{1,2\}$, the billiards system of $P_i$ contains $\cyc(P_i)-\delta_i$ trajectories that do not touch $P_1\cap P_2$, and these are also trajectories in the billiards system of $P$. It is straightforward to see that the billiards system of $P$ has at most $\eta$ trajectories that intersect $P_1\cap P_2$. 
\end{proof}

If $\eta=1$ in the preceding lemma, then $\delta_1=\delta_2=1$, and there must be exactly one trajectory in the billiards system of $P$ that intersects $P_1\cap P_2$. Hence, we have the following useful corollary.  

\begin{corollary}\label{lem:cut1}
Let $P=P_1\cup P_2$, where $P_1$ and $P_2$ are grid polygons such that $P_1\cap P_2$ is a single pane. Then $\cyc(P)=\cyc(P_1)+\cyc(P_2)-1$.
\end{corollary}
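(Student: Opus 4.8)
The plan is to sandwich $\cyc(P)$ between matching upper and lower bounds, both obtained by feeding the special case $\eta=1$ into Lemma~\ref{lem:general_cut} and supplementing it with a direct count of trajectories. The first step is to pin down the quantities $\delta_1,\delta_2$ appearing in that lemma. Since $P_1\cap P_2$ consists of a single pane $b$, and $b$ is a boundary pane of each $P_i$, exactly one trajectory of the billiards system of $P_i$ touches $b$---namely the one whose cycle contains the index of $b$, because the lone beam emitted from $\midp(b)$ lies on a single trajectory. Hence $\delta_1=\delta_2=1$, and Lemma~\ref{lem:general_cut} immediately yields the upper bound $\cyc(P)\le \cyc(P_1)+\cyc(P_2)-\delta_1-\delta_2+\eta=\cyc(P_1)+\cyc(P_2)-1$.

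For the matching lower bound, I would exhibit $\cyc(P_1)+\cyc(P_2)-1$ pairwise-distinct trajectories in the billiards system of $P$. From each $P_i$ take the $\cyc(P_i)-1$ trajectories that avoid $b$; because converting the mirror-pane $b$ into a transparent window affects only those beams that actually reach $b$, each such trajectory persists verbatim as a trajectory of $P$. This accounts for $(\cyc(P_1)-1)+(\cyc(P_2)-1)$ trajectories. Finally, the pane $b$ becomes an interior (window) pane of $P$, and the beam that formerly reflected off $b$ inside $P_1$ now passes straight through it; thus at least one further trajectory of $P$ crosses $b$, and it is distinct from all the others since they avoid $b$. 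Summing gives $\cyc(P)\ge (\cyc(P_1)-1)+(\cyc(P_2)-1)+1=\cyc(P_1)+\cyc(P_2)-1$, and combining with the upper bound completes the proof.

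The only real subtlety lies in the persistence claim---that opening the window at $b$ leaves every $b$-avoiding trajectory unchanged, and that the beams crossing $b$ assemble into a single trajectory rather than accidentally reconnecting the others. The step I expect to require the most care is tracing the straight-line continuation of a beam through $\midp(b)$: unfolding the $60^\circ$ reflection shows that the beam arriving at $\midp(b)$ from the $P_1$-side continues into $P_2$ exactly along the reflected beam of the $P_2$-trajectory through $b$. This identifies the two trajectories touching $b$ (one in each $P_i$) as the two halves of a single merged trajectory of $P$, which both reconfirms that exactly one new trajectory appears and shows that no other trajectories are disturbed. Alternatively, one may bypass the unfolding entirely and rely solely on the upper bound from Lemma~\ref{lem:general_cut} together with the crude lower bound above, which already suffices to force equality.
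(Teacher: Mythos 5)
Your proof is correct and follows essentially the same route as the paper: the paper derives the corollary by noting that when $\eta=1$ one has $\delta_1=\delta_2=1$, applying Lemma~\ref{lem:general_cut}, and observing that exactly one trajectory of $P$ crosses $P_1\cap P_2$ while all $b$-avoiding trajectories of $P_1$ and $P_2$ persist in $P$. Your sandwich formulation (upper bound from the lemma, lower bound from the explicit trajectory count) is just a slightly more detailed write-up of that same argument.
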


Let us say a grid polygon $P$ is \dfn{primitive} if there do not exist grid polygons $P_1$ and $P_2$ such that $P=P_1\cup P_2$ and such that $P_1\cap P_2$ is a single pane. Corollary~\ref{lem:cut1} will allow us to restrict our attention to primitive grid polygons. We will often need to handle the grid polygons in Figure~\ref{FigBill10} (and their rotations) separately. The proof of the next lemma is the main place where we apply Theorem~\ref{thm:HypothesisA}, which we proved in Section~\ref{sec:triangles}. 

\begin{figure}[ht]
  \begin{center}\includegraphics[height=1.979cm]{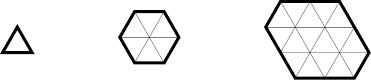}
  \end{center}
  \caption{Three primitive grid polygons.}\label{FigBill10}
\end{figure}

\begin{lemma}\label{lem:alpha_3_bound}
Let $P$ be a primitive grid polygon that is not a rotation of one of the grid polygons in Figure~\ref{FigBill10}. Let $\alpha_m$ be the number of $m$-cycles in $\pi_P$. Then $\alpha_3\leq\alpha_4+\sum_{m\geq 5}(m-2)\alpha_m$. 
\end{lemma}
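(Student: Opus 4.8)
The plan is to bound $\alpha_3$ by a charging argument in which each triangular trajectory is assigned to a non-triangular cycle whose trajectory it crosses, with Theorem~\ref{thm:HypothesisA} controlling how much charge each cycle can absorb. Concretely, I would count incidences between the triangular trajectories and the larger cycles. For a cycle $c$ of size $m \ge 4$, Theorem~\ref{thm:HypothesisA} guarantees that $\traj(c)$ meets at most $m-2$ triangular trajectories, while Lemma~\ref{lem:triangle} guarantees that each triangular trajectory meets at most one \emph{other} triangular trajectory. The goal is to show that every one of the $\alpha_3$ triangular trajectories can be charged to a distinct ``slot'' among the larger cycles, where a cycle of size $m\ge5$ offers $m-2$ slots and a $4$-cycle offers only one.

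The first key step is a structural lemma: in a primitive grid polygon that is not a rotation of one of the shapes in Figure~\ref{FigBill10}, every triangular trajectory crosses at least one non-triangular trajectory. The role of primitivity, together with the exclusion of the three small shapes, is precisely to rule out ``isolated'' triangular trajectories. If a triangular trajectory $\Delta$ met no other trajectory, or met only a second triangle (a Star-of-David configuration) with nothing else passing through, then one could separate the region enclosing these triangles from the rest of $P$ along a single pane, contradicting primitivity via Corollary~\ref{lem:cut1}, unless $P$ is itself one of the excluded base shapes. I would prove this by taking a minimal isolated triangular trajectory and analyzing the grid cells adjacent to it, using a Lemma~\ref{lem:E(f,B)}-type argument to locate a cut pane.

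Feeding the structural lemma into the incidence count, each triangular trajectory sends one unit of charge to a non-triangular cycle it crosses. Cycles of size $m\ge5$ cause no difficulty, since Theorem~\ref{thm:HypothesisA} already bounds the charge they can receive by $m-2$. The delicate point, and what I expect to be the main obstacle, is the $4$-cycles: Theorem~\ref{thm:HypothesisA} only prevents a $4$-cycle from meeting three triangular trajectories, so a priori a $4$-cycle could receive two units of charge, which would yield only the weaker bound $\alpha_3 \le 2\alpha_4 + \sum_{m\ge5}(m-2)\alpha_m$. To recover the sharp coefficient on $\alpha_4$, I would show that whenever a $4$-cycle $c$ crosses two triangular trajectories $\Delta_1,\Delta_2$, the local configuration is so constrained that either one of $\Delta_1,\Delta_2$ must also cross some further cycle (to which its charge can be rerouted), or the three trajectories assemble into a sub-polygon that is cut off by a single pane or equals one of the shapes in Figure~\ref{FigBill10}. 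This is a finite geometric case analysis, governed by the fact that every segment meets the boundary at $60^\circ$ and by the even-crossing parity that any trajectory crossing the boundary of the region bounded by $\Delta_i$ does so an even number of times.

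The difficulty therefore concentrates in the $4$-cycle analysis and in pinning down why the three polygons of Figure~\ref{FigBill10} are the only exceptions; these should turn out to be exactly the minimal configurations (a single triangle, the unit hexagon, and a shape carrying two triangles charged to one $4$-cycle) in which the charging cannot be balanced. Once the charging is shown to respect the capacities $1$ for $4$-cycles and $m-2$ for cycles of size $m \ge 5$, summing over all larger cycles yields $\alpha_3 \le \alpha_4 + \sum_{m\ge5}(m-2)\alpha_m$, as desired.
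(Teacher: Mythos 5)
Your overall framework (charging triangular trajectories against larger cycles, with capacities supplied by Theorem~\ref{thm:HypothesisA}, and with $4$-cycles as the delicate case) matches the paper's, and your treatment of cycles of size at least $5$ is exactly right. But your proposed resolution of the $4$-cycle problem has a genuine gap. You want every triangular trajectory to occupy a distinct slot, with each $4$-cycle offering one slot, justified by the local claim: whenever a $4$-cycle meets two triangles $\Delta_1,\Delta_2$, one of them also crosses some further cycle, to which its charge can be rerouted. Even granting that claim geometrically, the rerouting does not go through globally. Consider the abstract configuration (which your claim does not exclude) of a path $\Delta_1$--$d_1$--$\Delta_2$--$d_2$--$\Delta_3$: three triangles and two $4$-cycles, where $\Delta_1$ meets only $d_1$, $\Delta_3$ meets only $d_2$, and $\Delta_2$ meets both. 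Each of $d_1,d_2$ satisfies your local condition (its neighbor $\Delta_2$ has an alternative), yet three triangles compete for two capacity-one slots, so no valid assignment exists and rerouting $\Delta_2$ merely oscillates between $d_1$ and $d_2$. In matching language, ``degree at least $1$ on the triangle side, degree at most $2$ on the $4$-cycle side'' gives only $|N(S)|\geq|S|/2$, not Hall's condition, and your local rerouting hypothesis does not close that factor of $2$.

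What is actually needed---and what the paper proves using primitivity and the exclusion of the polygons in Figure~\ref{FigBill10}---is the stronger pointwise statement: if $c$ is a $3$-cycle all of whose intersecting cycles are $3$- or $4$-cycles (let $Y$ be the set of such $3$-cycles), then $\traj(c)$ meets at least \emph{two} distinct $4$-cycles. With this, no rerouting or matching is necessary: counting incidences between $Y$ and the $4$-cycles, each $c\in Y$ contributes at least $2$, while Theorem~\ref{thm:HypothesisA} (with $m=4$) caps each $4$-cycle at $2$, so $2|Y|\leq 2\alpha_4$, i.e.\ $|Y|\leq\alpha_4$. The remaining $3$-cycles each meet a cycle of size at least $5$, and the capacity bound $m-2$ gives $\alpha_3-|Y|\leq\sum_{m\geq 5}(m-2)\alpha_m$, exactly as in your plan. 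So to repair your proof, strengthen your structural lemma from ``every triangle in $Y$ meets at least one $4$-cycle'' to ``every triangle in $Y$ meets at least two $4$-cycles'' (this is precisely where configurations like the path above are ruled out geometrically), and then replace the rerouting by this two-line double count.
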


\begin{proof}
For each $3$-cycle $c$ in $\pi_P$, let $F(c)$ be the set of cycles in $\pi_P$ whose trajectories intersect $\traj(c)$. Let $Y$ be the set of $3$-cycles $c$ in $\pi_P$ such that $F(c)$ contains only $3$-cycles and $4$-cycles. Using the hypothesis that $P$ is primitive and not a rotation of one of the polygons in Figure~\ref{FigBill10}, it is straightforward (though somewhat tedious) to verify if $c\in Y$, then $F(c)$ contains at least two $4$-cycles. On the other hand, Theorem~\ref{thm:HypothesisA} tells us that if $d$ is a $4$-cycle in $\pi_P$, then there are at most two $3$-cycles $c$ in $Y$ with $d\in F(c)$. Therefore, $|Y|\leq\alpha_4$. 

If $c$ is a $3$-cycle in $\pi_P$ that is not in $Y$, then $F(c)$ contains at least one cycle of size at least $5$. Theorem~\ref{thm:HypothesisA} tells us that if $d$ is an  $m$-cycle in $\pi_P$, then there are at most $m-2$ different $3$-cycles $c$ such that $d\in F(c)$. This implies that $\alpha_3-|Y|\leq\sum_{m\geq 5}(m-2)\alpha_m$. 
\end{proof}

We can now prove our main theorem concerning perimeters of grid polygons.

\begin{proof}[Proof of Theorem~\ref{thm:perim}]
The result is trivial if $\area(P)=1$, so we may assume $\area(P)\geq 2$ and proceed by induction on $\area(P)$. If $P$ is not primitive, then we can write $P=P_1\cup P_2$, where $P_1$ and $P_2$ are smaller grid polygons such that $P_1\cap P_2$ is a single pane. By induction, we have $\perim(P_i)\geq\frac{7}{2}\cyc(P_i)-\frac 32$ for each $i\in\{1,2\}$. Combining this with Corollary~\ref{lem:cut1} yields \[\perim(P)=\perim(P_1)+\perim(P_2)-2\geq \frac{7}{2}(\cyc(P_1)+\cyc(P_2)-1)-\frac{3}{2}=\frac{7}{2}\cyc(P)-\frac 32,\] as desired. Hence, we may assume $P$ is primitive. We easily verify that the desired inequality holds if $P$ is a rotation of one of the grid polygons in Figure~\ref{FigBill10}; hence, let us assume that this is not the case. Let $\alpha_m$ be the number of $m$-cycles in $\pi_P$. We have \[\frac{\perim(P)}{\cyc(P)}=\frac{3\alpha_3+\sum_{m\geq 4}m\alpha_m}{\alpha_3+\sum_{m\geq 4}\alpha_m}.\] For fixed values of $\alpha_4,\alpha_5,\ldots$, the function $x\mapsto\displaystyle \frac{3x+\sum_{m\geq 4}m\alpha_m}{x+\sum_{m\geq 4}\alpha_m}$ is decreasing in $x$ whenever $x\geq 0$. Therefore, we can apply Lemma~\ref{lem:alpha_3_bound} to find that \[\frac{\perim(P)}{\cyc(P)}\geq\frac{3(\alpha_4+\sum_{m\geq 5}(m-2)\alpha_m)+\sum_{m\geq 4}m\alpha_m}{(\alpha_4+\sum_{m\geq 5}(m-2)\alpha_m)+\sum_{m\geq 4}\alpha_m}=\frac{7\alpha_4+\sum_{m\geq 5}(4m-6)\alpha_m}{2\alpha_4+\sum_{m\geq 5}(m-1)\alpha_m}.\] Since $4m-6\geq\frac{7}{2}(m-1)$ for all $m\geq 5$, we conclude that $\perim(P)\geq\frac{7}{2}\cyc(P)>\frac{7}{2}\cyc(P)-\frac{3}{2}$. 
\end{proof}

We now proceed to prove Theorems~\ref{thm:area} and~\ref{thm:equality}. Recall that we have scaled the triangular grid so that each pane has length $1$. Given a cycle $c$ in $\pi_P$, we write $\len(\traj(c))$ for the \dfn{length} of $\traj(c)$, which is just the total length of the line segments in $\traj(c)$. It is easy to check that $\area(P)=\frac{2}{3}\sum_{c}\len(\traj(c))$, where the sum is over all cycles in $\pi_P$. 

\begin{proposition}\label{prop:indecomposable}
If $P$ is a primitive grid polygon that is not a rotation of one of the grid polygons in Figure~\ref{FigBill10}, then $\area(P)\geq 6\cyc(P)$. 
\end{proposition}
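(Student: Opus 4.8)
The plan is to start from the identity $\area(P)=\frac{2}{3}\sum_{c}\len(\traj(c))$ recorded just above, so that the proposition is equivalent to $\sum_{c}\len(\traj(c))\geq 9\cyc(P)$, and to try to extract this from lower bounds on the individual trajectory lengths together with the counting inequality in Lemma~\ref{lem:alpha_3_bound}. First I would record the elementary bookkeeping fact that a trajectory segment between two consecutive bounces has length $\frac12 t$, where $t$ is the number of grid cells it crosses, so that $\len(\traj(c))=\frac12\sum_i t_i$. This is where primitivity enters: if a grid cell had two of its sides on the boundary of $P$, that cell would be a ``leaf'' yielding a single-pane cut, contradicting primitivity (recall $P$ has more than one cell since it is not one of the shapes in Figure~\ref{FigBill10}). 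Hence every cell crossed by a segment is entered or exited through an interior pane, so each $t_i\geq 2$.

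Next I would prove the sharp bound for triangular trajectories: a $3$-cycle satisfies $\len(\traj(c))\geq\frac92$, equivalently its side length is at least $\frac32$. The only way to get side length $\frac12$ is the medial triangle of a single cell having all three sides on the boundary, which is impossible in a primitive polygon with more than one cell; since the side lengths of triangular trajectories are odd multiples of $\frac12$, the next possibility is $\frac32$. Thus each $3$-cycle contributes area at least $3$, matching the contribution of a triangular trajectory of a unit hexagon and explaining the constant $6=2\cdot 3$.

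I would then combine these length estimates with Lemma~\ref{lem:alpha_3_bound} through a discharging argument. Writing $\alpha_m$ for the number of $m$-cycles, the $3$-cycles can fall short of the target $6\alpha_3$ by as much as $3\alpha_3$, so the crux is to recover this deficit from the longer trajectories. Each $3$-cycle would transfer its deficit to the longer trajectories it crosses, and Theorem~\ref{thm:HypothesisA} bounds the number of $3$-cycles chargeable to a given $m$-cycle by $m-2$; the inequality $\alpha_3\leq\alpha_4+\sum_{m\geq5}(m-2)\alpha_m$ is exactly the resulting bookkeeping. The geometric input needed to close this is that a trajectory receiving charge genuinely carries enough extra length (area) to absorb it, and I would supply this by reusing the region decomposition from the proof of Lemma~\ref{lem:K_iThing}: the $X$-regions $R,R',R'',U_j,E(f_i,B)$ attached to the shorelines of a long cycle account for area in a way that scales with how much the trajectory winds, i.e.\ with how many triangular trajectories it meets.

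The main obstacle I anticipate is precisely this last point. The naive per-cycle bound ``each segment crosses at least two cells'' only gives area contribution $\geq\frac23 m$, and one checks by a Farkas/LP computation that feeding any purely size-dependent bound into Lemma~\ref{lem:alpha_3_bound} as a black box would force the implausible estimates ``area contribution $\geq 9$ for a $4$-cycle'' and ``$\geq 3m$ for an $m$-cycle.'' So the charging cannot be size-indexed; it must be \emph{area-weighted}, keeping track of where along a long trajectory the crossings with triangular trajectories occur and certifying disjoint surplus near each crossing via the shoreline regions. Making this surplus rigorous—so that the total recovered from the longer cycles dominates $3\alpha_3$—is the technical heart of the argument. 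Finally I would dispose of the finitely many small primitive polygons where these estimates degenerate, namely the shapes of Figure~\ref{FigBill10} and their rotations, by direct inspection, which is exactly why they are excluded from the statement.
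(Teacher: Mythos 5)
Your proposal does not close, and the gap is the one you yourself flag at the end: the ``area-weighted discharging'' step, in which each longer trajectory is supposed to carry certifiably disjoint surplus area near its crossings with triangular trajectories, is never formulated, let alone proved --- and it is not a finishing detail, it would be the entire proof. Your own Farkas/LP observation shows why no easy version of it can work: with your $3$-cycle bound (area contribution $\geq 3$, hence deficit up to $3$), routing deficits through Lemma~\ref{lem:alpha_3_bound} would require each $4$-cycle to contribute area $\geq 9$, whereas the best bound available for a $4$-cycle in a primitive polygon is trajectory length $9$, i.e.\ area contribution $6$ --- there is no surplus of $3$ to harvest. So the plan stalls exactly where you say it does. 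It is also aimed at the wrong tools: in the paper, Lemma~\ref{lem:alpha_3_bound} and Theorem~\ref{thm:HypothesisA} are used only for the perimeter bound (Theorem~\ref{thm:perim}); the proof of this proposition transfers no area between cycles at all.

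What you are missing are two stronger consequences of the hypotheses. First, primitivity gives every trajectory segment length $\geq 3/2$, not $\geq 1$: a segment of length $1$ crosses a single interior pane both of whose endpoints lie on the boundary of $P$, and cutting along that chord exhibits $P$ as a union of two grid polygons meeting in one pane (your ``leaf cell'' argument only rules out length $1/2$). Second --- and this is the decisive point you never use --- excluding the polygons of Figure~\ref{FigBill10} pushes the minimum side of a triangular trajectory from $3/2$ up to $5/2$: if a triangular trajectory has side $3/2$, the same chord-cutting argument applied at each corner of its circumscribing triangle forces $P$ to be a unit hexagon or non-primitive. Hence all $3$-cycles have length $\geq 15/2$, and the paper then splits into two cases. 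If every triangular trajectory has length $>15/2$ (hence $\geq 21/2$), then $3$-cycles contribute area $\geq 7$, $4$-cycles $\geq 6$, $5$-cycles $\geq 7$, and $m$-cycles with $m\geq 6$ contribute $\geq m$; every cycle individually meets the target $6$, and summing gives $\area(P)\geq 6\cyc(P)$ with no discharging. If some triangular trajectory has length exactly $15/2$ (contribution only $5$), the paper does not discharge either: it cuts $P$ into pieces around that trajectory and argues by induction on area, via Lemma~\ref{lem:general_cut} with $\eta=2$ and Corollary~\ref{lem:cut1}. Your proposal contains no induction and no mechanism for this exceptional case, so even if your charging scheme were made precise, the side-$5/2$ triangular trajectories would still defeat it.
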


\begin{proof}
The proof is by induction on $\area(P)$. Because $P$ is primitive, all of the triangular trajectories in the billiards system of $P$ have length at least $15/2$. Let us first assume that every triangular trajectory in the billiards system of $P$ has length strictly greater than $15/2$. This forces every triangular trajectory to have length at least $21/2$. Because $P$ is primitive, every line segment in a trajectory in the billiards system of $P$ has length at least $3/2$. Therefore, $\len(\traj(c))\geq 3m/2$ for every $m$-cycle $c$ in $\pi_P$. It is also straightforward to check that every $4$-cycle in $\pi_P$ has a trajectory of length at least $9$ and that every $5$-cycle in $\pi_P$ has a trajectory of length at least $21/2$. Let $\alpha_m$ denote the number of $m$-cycles in $\pi_P$. Because $\area(P)=\frac{2}{3}\sum_{c}\len(\traj(c))$ (with the sum ranging over all cycles in $\pi_P$), we have \[\area(P)\geq \frac{2}{3}\left((21/2)\alpha_3+9\alpha_4+(21/2)\alpha_5+\sum_{m\geq 6}(3m/2)\alpha_m\right)=7\alpha_3+6\alpha_4+7\alpha_5+\sum_{m\geq 6}m\alpha_m\] \[\geq 6\cyc(P).\] 

Now assume that the billiards system of $P$ contains at least one triangular trajectory $\Delta$ of length $15/2$. Then up to rotation, $P$ must have the following shape, where curvy curves are schematic illustrations of parts of the boundary of $P$: \[\includegraphics[height=4.476cm]{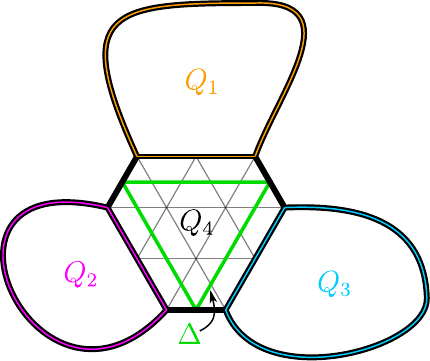}\] The polygon $P$ consists of pieces $Q_1,Q_2,Q_3,Q_4$ as shown: the boundaries of $Q_1,Q_2,Q_3$ are indicated by thin orange, pink, and teal strips, respectively, and $Q_4$ is the closure of $P\setminus(Q_1\cup Q_2\cup Q_3)$. We also allow for each of $Q_1,Q_2,Q_3$ to be a single line segment with area $0$ (a degenerate grid polygon). Because $P$ is primitive, none of $Q_1,Q_2,Q_3$ can have area $1$ or $2$. If $\area(Q_i)\leq 3$ for all $i\in\{1,2,3\}$, then (because it is not a rotation of a polygon in Figure~\ref{FigBill10}) $P$ must be a rotation of one of the polygons \[\includegraphics[height=2.629cm]{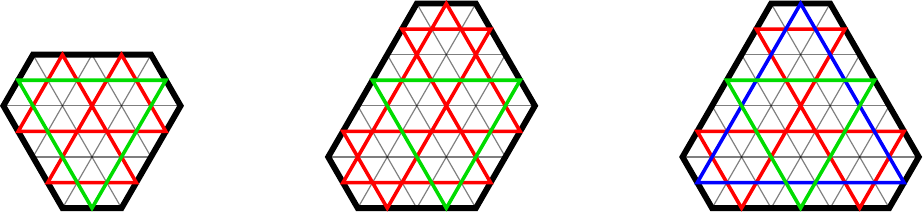},\]
so we can check directly that $\area(P)\geq 6\cyc(P)$. Hence, we may suppose that $\area(Q_i)\geq 4$ for some $i\in\{1,2,3\}$; without loss of generality, assume $\area(Q_1)\geq 4$. Let $Q'=Q_2\cup Q_3\cup Q_4$. Our strategy is to invoke Lemma~\ref{lem:general_cut} with $P_1=Q_1$ and $P_2=Q'$. With these choices of $P_1$ and $P_2$, let $\delta_1$ and $\delta_2$ be as defined in Lemma~\ref{lem:general_cut}. 

Let $t_1$ and $t_2$ be the unique panes in the boundary of $Q_1$ that touch the boundary of $Q'$ but are not contained in $Q'$. Because $P$ is primitive and $Q_1$ has area at least $4$, there are eight possibilities for the orientations of $t_1$ and $t_2$; these possibilities are depicted in Figure~\ref{FigBill11}. 

Suppose first that the orientations of $t_1$ and $t_2$ are as shown in one of the four images on the top of Figure~\ref{FigBill11}. Then $Q_1$ is primitive. If $Q_1$ is not a rotation of one of the grid polygons in Figure~\ref{FigBill10}, then we may apply induction to see that $\area(Q_1)\geq 6\cyc(Q_1)\geq 6(\cyc(Q_1)-\delta_1+1)$. On the other hand, if $Q_1$ is a rotation of one of the polygons in Figure~\ref{FigBill10}, then it must be a rotation of the rightmost polygon in that figure. In this case, $\area(Q_1)=16$, $\cyc(Q_1)=3$, and $\delta_1=2$, so $\area(Q_1)\geq 6(\cyc(Q_1)-\delta_1+1)$ again.

\begin{figure}[ht]
  \begin{center}\includegraphics[width=\linewidth]{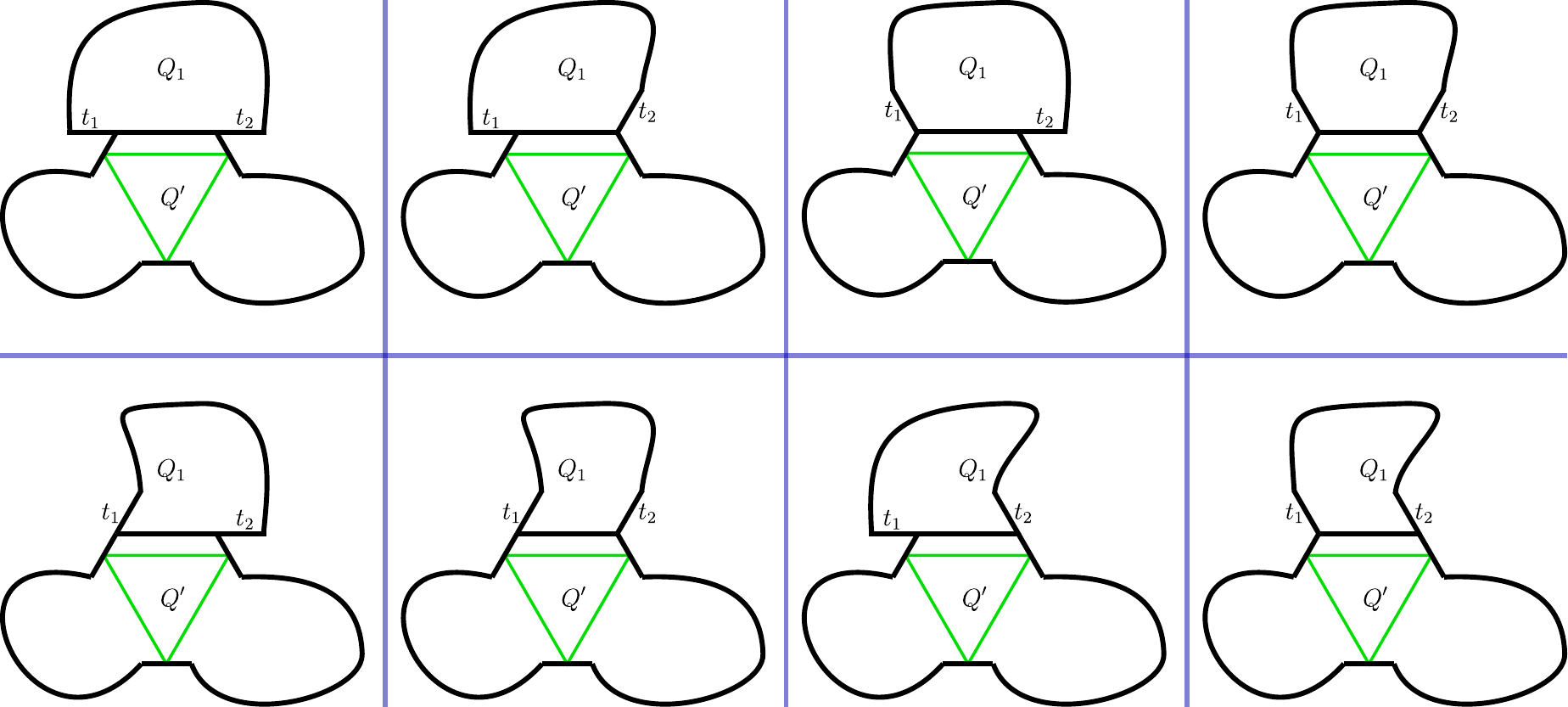}
  \end{center}
  \caption{The eight possible orientations of the panes $t_1$ and $t_2$.}\label{FigBill11}
\end{figure}

Now suppose the orientations of $t_1$ and $t_2$ are as shown in one of the four images on the bottom of Figure~\ref{FigBill11}. In this case, we have $Q_1=\widetilde Q_1\cup T$, where $T$ is a grid polygon that is a triangle of area $1$ and $\widetilde Q_1\cap T$ is a single pane. In the two left (respectively, right) images on the bottom of Figure~\ref{FigBill11}, the triangle $T$ has $t_1$ (respectively, $t_2$) as one of its sides. Note that $\widetilde Q_1$ is primitive. It follows from Corollary~\ref{lem:cut1} that $\cyc(\widetilde Q_1)=\cyc(Q_1)$. If $\widetilde Q_1$ is not a rotation of one of the grid polygons in Figure~\ref{FigBill10}, then we can use induction to see that $\area(\widetilde Q_1)\geq 6\cyc(\widetilde Q_1)=6\cyc(Q_1)\geq 6(\cyc(Q_1)-\delta_1+1)$. On the other hand, if $\widetilde Q_1$ is a rotation of one of the polygons in Figure~\ref{FigBill10}, then (because $\area(\widetilde Q_1)\geq 3$) it is straightforward to check that $\delta_1=2$ and that $\area(Q_1)\geq 6(\cyc(Q_1)-1)$. Thus, $\area(Q_1)\geq 6(\cyc(Q_1)-\delta_1+1)$ in this case as well. 

We have shown that in each of the eight possible cases illustrated in Figure~\ref{FigBill11}, we have 
\begin{equation}\label{Eq:Q1}
\area(Q_1)\geq 6(\cyc(Q_1)-\delta_1+1).    
\end{equation}

Now, $Q'$ is primitive, and it is clearly not a single triangle of area $1$ or a unit hexagon. Invoking Lemma~\ref{lem:general_cut} with $P_1=Q_1$, $P_2=Q'$, and $\eta=2$, we find that $\cyc(P)\leq\cyc(Q_1)+\cyc(Q')-\delta_1-\delta_2+2\leq (\cyc(Q_1)-\delta_1+1)+\cyc(Q')$. If $Q'$ is not a rotation of one of the polygons in Figure~\ref{FigBill10}, then we can use induction to see that $\area(Q')\geq 6\cyc(Q')$. In this case, we can apply \eqref{Eq:Q1} to see that $\area(P)=\area(Q_1)+\area(Q')\geq 6((\cyc(Q_1)-\delta_1+1)+\cyc(Q'))\geq 6\cyc(P)$, as desired. 
On the other hand, if $Q'$ is a rotation of one of the polygons in Figure~\ref{FigBill10}, then it is a rotation of the rightmost such polygon, so $\area(Q')=16$, $\cyc(Q')=3$, and $\delta_2=2$. In this case, $\cyc(P)\leq\cyc(Q_1)+\cyc(Q')-\delta_1-\delta_2+2=\cyc(Q_1)+3-\delta_1$, so invoking \eqref{Eq:Q1} yields $\area(P)=\area(Q_1)+16\geq 6(\cyc(Q_1)-\delta_1+1)+16>6(\cyc(Q_1)+3-\delta_1)\geq 6\cyc(P)$, as desired.
\end{proof}

With the previous proposition out of the way, we can painlessly finish proving Theorems~\ref{thm:area} and~\ref{thm:equality}. Let us first establish one additional piece of terminology. Let $P$ be a grid polygon. It is possible to find a sequence $(P_k)_{k=1}^r$ of grid polygons and a sequence $(Q_k)_{k=1}^{r}$ of primitive grid polygons with $P_1=Q_1$ and $P=P_r$ such that $P_k=P_{k-1}\cup Q_{k}$ and such that $P_{k-1}\cap Q_{k}$ is a single pane for all $k\in\{2,\ldots,r\}$. Moreover, the primitive grid polygons $Q_1,\ldots,Q_{r}$ are uniquely determined up to reordering. We call $Q_1,\ldots,Q_{r}$ the \dfn{primitive pieces} of $P$.

\begin{proof}[Proof of Theorems~\ref{thm:area} and~\ref{thm:equality}]
Let $Q_1,\ldots,Q_r$ be the primitive pieces of $P$. For each $i\in [r]$, Proposition~\ref{prop:indecomposable} tells us that $\area(Q_i)\geq 6\cyc(Q_i)$ if $Q_i$ is not a rotation of one of the grid polygons in Figure~\ref{FigBill10}; if $Q_i$ \emph{is} a rotation of one of the grid polygons in Figure~\ref{FigBill10}, then we can check directly that $\area(Q_i)\geq 6\cyc(Q_i)-6$. It follows from Corollary~\ref{lem:cut1} that $\cyc(P)=\sum_{i=1}^r\cyc(Q_i)-(r-1)$. Thus, \[\area(P)=\sum_{i=1}^r\area(Q_i)\geq \sum_{i=1}^r(6\cyc(Q_i)-6)= 6\left(\sum_{i=1}^r\cyc(Q_i)-(r-1)\right)-6=6\cyc(P)-6.\] This completes the proof of Theorem~\ref{thm:area}. This argument shows that $\area(P)=6\cyc(P)-6$ if and only if $\area(Q_i)=6\cyc(Q_i)-6$ for all $i\in[r]$. By invoking Proposition~\ref{prop:indecomposable} and inspecting the grid polygons in Figure~\ref{FigBill10}, we see that this occurs if and only if the primitive pieces $Q_1,\ldots,Q_r$ are all unit hexagons. This proves Theorem~\ref{thm:equality}.
\end{proof}

\section{Reflections and Next Directions}\label{sec:conclusion}

We believe that this article just scratches the surface of rigid combinatorial billiards systems and their connections with plabic graphs and membranes. In this section, we discuss several variations and potential avenues for future research. 
\subsection{Perimeter vs. Cycles}
Recall Conjecture~\ref{conj:4n-2}, which says that $\perim(P)\geq 4\cyc(P)-2$ for every grid polygon $P$. The grid polygons $P$ satisfying $\perim(P)=4\cyc(P)-2$ seem more sporadic and unpredictable than the equality cases of Theorem~\ref{thm:area}, which are just the trees of unit hexagons by Theorem~\ref{thm:equality}. This gives a heuristic hint as to why Conjecture~\ref{conj:4n-2} is more difficult to prove than Theorem~\ref{thm:area}. 

\begin{figure}[ht]
  \begin{center}\includegraphics[height=3.264cm]{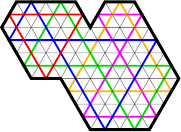}\qquad\qquad\includegraphics[height=3.899cm]{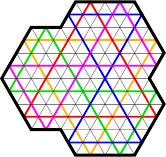}
  \end{center}
  \caption{Two grid polygons with perimeter $18$, each of which has $5$ cycles in its billiards system.}\label{FigBill8}
\end{figure}

\subsection{Other Families of Plabic Graphs} 

Let $G$ be a connected reduced plabic graph with $n$ marked boundary points and $v$ vertices, and let $c$ be the number of cycles in the trip permutation $\pi_G$. Corollary~\ref{cor:plabic} provides inequalities that say how large $n$ and $v$ must be relative to $c$ in the case when $G$ has essential dimension $2$. One can ask for similar inequalities when $G$ is taken from some other interesting family of plabic graphs. One natural candidate for such a family is the collection of plabic graphs of essential dimension $3$; we refer to \cite{LamPostnikov} for further details concerning the definition. It is also natural to consider plabic graphs that can be obtained from polygons in other planar grids besides the triangular grid; Figure~\ref{FigBill12} shows some examples (in these examples, we dismiss our earlier assumption that all vertices in a plabic graph are trivalent). 

\begin{figure}[ht]
  \begin{center}\includegraphics[height=4.5cm]{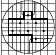}\qquad\includegraphics[height=4.5cm]{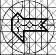}\qquad\includegraphics[height=4.5cm]{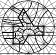}
  \end{center}
  \caption{Plabic graphs obtained from polygons in different planar grids.}\label{FigBill12}
\end{figure}

If $G$ is a plabic graph obtained from the square grid (as on the left of Figure~\ref{FigBill12}), then it is not too difficult to prove that $v\geq 3c-2$ and that $n\geq 4c$; moreover, these bounds are tight. We omit the details. 

\subsection{Regions with Holes} 

Suppose $Q$ is a region in the triangular grid obtained from a grid polygon by cutting out some number of polygonal holes. We can define the billiards system for $Q$ in the same way that we defined it for a grid polygon. It would be interesting to obtain analogues of Theorems~\ref{thm:area}, \ref{thm:perim}, \ref{thm:equality}, and~\ref{thm:HypothesisA} in this more general setting. The resulting analogues of Theorems~\ref{thm:area} and~\ref{thm:perim} might need to incorporate the genus of $Q$. Indeed, Figure~\ref{FigBill9} shows a region $Q$ with genus $1$ for which the inequalities in Theorems~\ref{thm:area} and~\ref{thm:perim} are false as written. 

\begin{figure}[ht]
  \begin{center}\includegraphics[height=2.621cm]{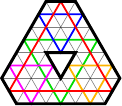}
  \end{center}
  \caption{Trajectories in a triangular grid region of genus $1$. }\label{FigBill9}
\end{figure}

\section*{Acknowledgements}
The first author was supported by the National Science Foundation under Award No.\ DGE--1656466 and Award No.\ 2201907, by a Fannie and John Hertz Foundation Fellowship, and by a Benjamin Peirce Fellowship at Harvard University. The second author was supported by Elchanan Mossel's Vannevar Bush Faculty Fellowship ONR-N00014-20-1-2826 and Elchanan Mossel's Simons Investigator award (622132). We are grateful to Alex Postnikov for helpful conversations. We thank Noah Kravitz for suggesting that we consider billiards systems in square grids and in triangular grid regions with holes cut out, as discussed in Section~\ref{sec:conclusion}. We thank the anonymous referee for helpful suggestions.

\end{document}